\newtheorem{theorem}{Theorem}[section]
\newtheorem{lemma}[theorem]{Lemma}
\newtheorem{proposition}[theorem]{Proposition}
\theoremstyle{definition}
\newtheorem{definition}[theorem]{Definition}
\newtheorem{remark}[theorem]{Remark}
\numberwithin{equation}{section}
\title{Droplet Minimizers of an Isoperimetric Problem with long-range interactions}
\author{Marco Cicalese}
\address[Marco Cicalese]{Dipartimento di Matematica e Applicazioni ``R. Caccioppoli'', Universit\`a di Napoli Federico II, Via Cintia, 80126 Napoli, Italy}
\email{cicalese@unina.it}
\author{Emanuele Spadaro}
\address[Emanuele Spadaro]{Max-Planck-Institut f\"ur Mathematik in den Naturwissenschaften, Inselstrasse 22-24, 04103 Leipzig, Germany}
\email{spadaro@mis.mpg.de}
\date{15 September 2011}
\newcommand\R{{\mathbb R}}
\newcommand\N{{\mathbb N}}
\newcommand\T{{\mathbb T}}
\newcommand{\eps}{{\varepsilon}}
\newcommand\sdif{{\vartriangle}}
\newcommand{\de}{\partial}
\newcommand{\ph}{\varphi}
\newcommand{\cC}{\mathcal{C}}
\newcommand{\cH}{\mathcal{H}}
\newcommand{\s}{\mathbb{S}}
\newcommand{\Per}{\textup{Per}}
\newcommand{\NL}{\textup{NL}}
\newcommand{\Id}{\textup{Id}}
\newcommand{\Om}{\Omega}
\newcommand{\dist}{\textup{dist}}
\newcommand{\asym}{\alpha}
\begin{document}
\begin{abstract}
We give a detailed description of the geometry of single droplet patterns in a nonlocal isoperimetric problem.
In particular we focus on the sharp interface limit of the Ohta--Kawasaki free energy for diblock copolymers, regarded as a paradigm for those energies modeling physical systems characterized by a competition between short and a long-range interactions.
Exploiting fine properties of the regularity theory for minimal surfaces, we extend previous partial results in different directions and give robust tools for the geometric analysis of more complex patterns.
\end{abstract}

\maketitle

%%%%%%%%%%%%%%% Introduction %%%%%%%%%%%%%%%%%%%%%%%%%%%%%%%%%%%%%%%
%%%%%%%%%%%%%%%%%%%%%%%%%%%%%%%%%%%%%%%%%%%%%%%%%%%%%%%%%%%%%%%%%%%%
%%%%%%%%%%%%%%%%%%%%%%%%%%%%%%%%%%%%%%%%%%%%%%%%%%%%%%%%%%%%%%%%%%%%
%%%%%%%%%%%%%%%%%%%%%%%%%%%%%%%%%%%%%%%%%%%%%%%%%%%%%%%%%%%%%%%%%%%%
%%%%%%%%%%%%%%%%%%%%%%%%%%%%%%%%%%%%%%%%%%%%%%%%%%%%%%%%%%%%%%%%%%%%

\section{Introduction}
In several physical systems competing short-range attractive and long-range repulsive interactions often lead to the formation of mesoscopic scale patterns. Roughly speaking, the short-range interactions favor phase-separation on a microscopic scale, while the long-range ones frustrate such an ordering on the scale of the whole sample. When these systems can be described in terms of a free energy, such a phenomenon is usually referred to as an \textit{energy-driven} pattern formation. Examples of energy-driven patterns are ubiquitous in physics: among the others we recall ferromagnetic and polymeric systems, type-I superconductor films and Langmuir layers.  Even if these systems are driven by different physical laws, they exhibit remarkable similarities in the overall geometry of the formed patterns (see \cite{KohICM} and \cite{SeAn}). 

Our principal interest is the description of the geometry of patterns. For this reason, we leave for further studies the detailed analysis of more realistic systems and we focus here on an energy model which encodes only the main features of pattern-formation. More specifically, in what follows we are interested in the minimization of the following energy functional: 
\begin{equation*}%\label{e.OK0}
F_{\gamma,m}(u):=\int_{\R^n} |D u|+\gamma\int_\Omega\int_\Omega G(x,y)\,\big(u(x)-m\big)\big(u(y)-m\big)\,dx\,dy,
\end{equation*}
where $u$ is the order parameter of a two-phases system confined in $\Omega\subset\R^n$, and $\gamma$ and $m$ are two nonnegative numerical parameters. The two terms in the energy mimic attractive short-range and repulsive long-range energies between the phases. More precisely
the first term is local, it favors minimal interface area and drives the system toward a partition into few pure phases while the second term, involving a Coulomb-like kernel $G$, is non-local and favor a fine mixing of the phases. A detailed description of the energy is given in \S~\ref{s.notation}. The competition between these two terms is expected to induce the formation of highly regular mesoscopic patterns, whose geometry strongly depends on the choice of the parameters $\gamma$ and $m$ (e.g.~spherical spots, cylinders, gyroids and lamellae).

\subsection{The Ohta--Kawasaki functional for diblock copolymers}
The model we consider arises as a simplification of a Ginzburg-Landau functional proposed by Ohta and Kawasaki in their pioneering paper \cite{OK} as a possible description of a diblock copolymers' (DBC) system.
Even though it is questionable whether such an energy actually describes DBCs (see Choksi and Ren \cite{ChRe03}, Muratov \cite{Mu} and Niethammer and Oshita \cite{NiOs}), nevertheless it is a first, and mathematically non-trivial, attempt to capture some of the main features of these systems.
For such a reason it deserved over the last twenty years great attention from both the mathematical and the physical community (see e.g.~\cite{BaFr,deG,Mat,OK,Stil}).
Under several simplification, the Otha-Kawasaki functional can be written in the following form:
\begin{equation}\label{e.OK}
{\mathcal E}_{\eps,\sigma}(u)=\int_\Omega\left(\eps^2|\nabla u|^2+W(u)\right)\, dx+\sigma\int_\Omega\int_\Omega
G(x,y)\,(u(x)-m)\,(u(y)-m)\, dx\, dy,
\end{equation}
where the order parameter $u$ stems for the volume fraction of one block copolymer and $W$ is a standard double-well potential.
Here $m$ is the average of $u$ over the whole sample, namely $m=\fint_\Omega u$, and the kernel $G$ solves in $\Omega$ the Neumann problem for 
\begin{equation}\label{e.introG}
-\Delta G(x,\cdot)=\delta_x -\frac{1}{|\Omega|}, \qquad \int_{\Omega}G(x,y)\,dy=0.
\end{equation}
As in the classical Ginzburg--Landau energy functional, the first contribution to the energy forces a phase separation through the competition between the gradient and the non-convex potential. On the other hand, depending on the strength of the coupling constant $\sigma$, the long-range contribution favors a uniform distribution of the order parameter. This term has an entropic origin in the case of DBC (see \cite{BaFr,deG,Mat,OK,Stil}), but it can also be considered as the energy due to an electrostatic interaction between charged bodies if the order parameter is meant to be a density of charges (\cite{ChKh,EmKi}).

It is well-known from the results by Modica and Mortola \cite{MoMo, Mo} that, when $\eps\ll 1$, the Ginzburg-Landau energy can be approximated in the sense of $\Gamma$-convergence by a sharp interface energy of the form $\eps\int_\Omega|D u|$, with $u$ being a function of bounded variations taking the two values $0,1$ (these values identify the pure phases of the system as the sets $\{x:\,u(x)=0\}$ and $\{x:\,u(x)=1\}$) and $|D u|$ denoting the total variation of the measure $Du$. Formally, this fact gives the link between the Ohta--Kawasaki energy and the functional $F_{\gamma,m}$ (for $\gamma=\sigma/\eps$). It is worth pointing out that there exists no rigorous derivation of $F_{\gamma,m}$ from $\mathcal{E}_{\eps,\sigma}$ in the sense of $\Gamma$-convergence. Indeed, the presence of possible multiple scales, (e.g., the one of the phase separation and that of the pattern formation) could force the $\Gamma$-limit to be defined on more complex spaces of Young measures as it happens in the one dimensional case addressed by Alberti and M\"uller \cite{AlMu}.

\subsection{Single droplet minimizers}
Physical experiments (see, e.g., \cite{Kh-al} and \cite{SeAn}) suggest that, for some regimes of the parameters $\gamma$ and $m$, droplets equilibrium configurations are expected.
The main open issues in this regards are: (1) the rigorous justification of the observed lattice-type patterns (for example, the Abrikosov lattice in two dimensions) and (2) the description of the geometry of the droplets.

In this paper we contribute to the second question.
In particular, we investigate a regime of $\gamma$ and $m$ leading to the formation of a \textit{single} droplet minimizer, as a first step towards the analysis of multiple droplets patterns. Roughly speaking, a single droplet minimizer can be described as a connected region of one phase surrounded by the other one.
For this to happen, the competition between the two terms of the energy has to be unbalanced, with the confining term stronger than the nonlocal one. Moreover, it is also necessary to assume a contribution to the energy coming from the interaction with the boundary of $\Omega$. This is the reason why in the functional $F_{\gamma, m}$ the total variation of $D u$ is taken in the whole space. If this were not the case, the optimal resulting shape would in fact be an almost half ball located in a point of smallest mean curvature of $\de \Omega$. An analysis of this event, though interesting in its own, is not pursued here.

In order to identify the correct regime leading to a single droplet minimizer, we show here the different contributions to the energy of a single ball. As shown in \eqref{e.exp ball}, given a ball $B_{r_m}(p)\subset\Omega$ of radius $r_m$ centered at $p$ having average mass $m$, i.e.~$m|\Omega|=\,\omega_n\,r_m^n$ (here $|\Omega|$ stands for the $n$-dimensional volume of $\Omega$), it holds
\begin{align*}
F_{\gamma,m}(\chi_{B_{r_m}(p)})=
\begin{cases}
2\,\pi\,r_m+\gamma\,\left(\frac{\pi}{2}\,r_m^4\,\log r+\left(\pi^2\,g_{r_m}(p)-\frac{3\,\pi}{8}\right)\,r_m^4\right), &\text{if }\,n=2,\\
n\,\omega_n\,r_m^{n-1}+\gamma\,\left(\frac{2\,\omega_n}{4-n^2}\,r_m^{n+2}+\omega_n^2\,g_{r_m}(p)\, r_m^{2n}\right),&\text{if }\,n\geq3,
\end{cases}
\end{align*}
where $g_{r_m}(p)$ is uniformly bounded for $p$ in a compact subset of $\Omega$ -- see \S~\ref{ss.ball}. Therefore, for the isoperimetric term to be stronger than the nonlocal one, the regimes to be considered are \begin{align*}
\gamma\,r_m^3\,|\log r_m|<<1\quad\text{for}\quad n=2,\\
\gamma\,r_m^3<<1\quad\text{for}\quad n\geq3.
\end{align*}
Note that if $\gamma\to0$ the conditions above are trivially satisfied.
On the other hand, in the most interesting case of $\gamma\geq C>0$ one is forced to consider the small volume-fraction regime $r_m<<1$, which we will always assume.
Under these scalings we provide a detailed analysis of the minimizers of $F_{\gamma, m}$, showing that a single droplet is a minimizer for $F_{\gamma, m}$.
In particular, we prove:
\begin{itemize}
\item[(a)] the asymptotic convergence of the minimizers to round spheres in strong norms, providing the rate of convergence;
\item[(b)] the asymptotic optimal centering of the droplet in the domain;
\item[(c)] the expansion of the energy in terms of the radius $r_m$;
\item[(d)] the nonexistence of exact spherical droplets in domains $\Omega$ different from a ball; and, on the other hand, the uniqueness of the minimizer when $\Omega$ is a ball (in this case the minimizers is itself a ball centered at the center of $\Omega$).
\end{itemize}

These results are summarized in the following theorem (see next sections for more details on the notation).

\begin{theorem}\label{t.main-0}
Let $\Omega\subset\R^n$ be a bounded open set with $C^2$ boundary.
There exist $\delta_0, r_0>0$ (depending on $\Omega$) such that the following holds.
Assume $r_m\leq r_0$ and
\[
\gamma\,r_m^3\,|\log r_m|<\delta_0 \quad\text{if }\; n=2
\quad\text{and}\quad
\gamma\,r_m^{3}<\delta_0 \quad \text{if }\; n\geq3.
\]
Then, every minimizer $u_m=\chi_{E_m}\in \cC_m$ of $F_{\gamma,m}$ satisfies the following properties:
\begin{itemize}
\item[(i)] $E_m$ is a convex set and there exists $p_m\in\Omega$ and $\ph_m:\s^{n-1}\to \R$ such that $\de E_m=\{p_m+(r_m+\ph_m(x))\,x:x\in\s^{n-1}\}$, for some $p_m\in\Omega$ and $\ph_m:\s^{n-1}\to \R$ and
\begin{equation}\label{e.rate-0}
\|\ph_m\|_{C^{1}}\lesssim\gamma\,r_m^{n+3};
\end{equation}
%with $C=C(n,\alpha)>0$;
\item[(ii)] $p_m$ is close to the set of harmonic centers $\cH$ of $\Omega$, i.e.~$\lim_{r_m\to0}\dist(p_m,\cH)=0$;
\item[(iii)] the energy of $u_m$ has the following asymptotic expansion:
\begin{equation}\label{e.energy-0}
F_{\gamma,m}(u_m)=
\begin{cases}
2\,\pi\,r_m+\frac{\pi\,\gamma}{2}\,r_m^4\,\log r_m+\gamma\left(-\frac{1}{8}+\pi^2\,\min_\Omega h\right)\,r_m^4+O(r_m^6),& n=2,\\
n\,\omega_n\,r_m^{n-1}+\frac{2\,\gamma\,\omega_n}{4-n^2}\,r_m^{n+2}+\gamma\;\omega_n^2\,r_m^{2n}\,\min_\Omega h+O(r_m^{2n+2}),& n\geq3,
\end{cases}
\end{equation}
where $h$ is the Robin function;
\item[(iv)] $E_m$ is an exact ball if and only if the domain $\Omega$ is itself a ball, i.e.~up to translations $\Omega=B_R$ for some $R>0$, in which case $E_m=B_{r_m}$ is the unique minimizer.
\end{itemize}
\end{theorem}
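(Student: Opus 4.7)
The plan is to exploit the fact that in the chosen scaling the nonlocal term is a subordinate perturbation of the isoperimetric one, so that any minimizer $u_m=\chi_{E_m}$ can first be shown to be almost a ball and then analyzed by perturbative tools around the spherical configuration. I would start by establishing the basic regularity of $E_m$: since the nonlocal contribution provides an $L^\infty$ perturbation of the curvature, $E_m$ is a $(\Lambda,r_0)$-perimeter minimizer and hence enjoys uniform density estimates and $C^{1,\asym}$ regularity of $\de E_m$ away from a singular set of codimension at least $8$. Combining the quantitative isoperimetric inequality of Fusco--Maggi--Pratelli with the smallness of the nonlocal energy, I would then force $E_m$ into a Fuglede-type regime, i.e.\ show that, after choosing a suitable point $p_m\in\Om$, $\de E_m$ is a normal $C^{1,\asym}$-graph of some $\ph_m$ over $\de B_{r_m}(p_m)$ with small norm. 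This already settles the topological content of (i).

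For the sharp rate in (i) I would linearize the Euler--Lagrange equation
\[
H_{\de E_m}+2\gamma\, v_{E_m}=\lambda_m\qquad\text{on }\de E_m,\qquad v_E(x):=\int_\Om G(x,y)\,(\chi_E(y)-m)\,dy,
\]
around $\de B_{r_m}(p_m)$. The linearization of $H$ is the Jacobi operator $-r_m^{-2}(\Delta_{\s^{n-1}}+(n-1))\ph_m$, whose kernel on $L^2(\s^{n-1})$ is spanned by constants and first spherical harmonics. The constant mode is suppressed by the mass constraint ($\int_{\s^{n-1}}\ph_m=0$ to leading order), the first harmonics are suppressed by choosing $p_m$ as the barycenter of $E_m$. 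The remaining forcing is the tangential oscillation of $v_{B_{r_m}(p_m)}$ on $\de B_{r_m}(p_m)$: since the singular part of $G$ contributes a radial (hence constant) function on the sphere absorbed into $\lambda_m$, the effective forcing comes from the regular part of $G$, and because the mass of $E_m$ is of order $r_m^n$, its tangential oscillation on a sphere of radius $r_m$ is of order $\gamma\,r_m^{n+1}$. Inverting the Jacobi operator on the orthogonal complement of its kernel then gains a factor $r_m^2$, yielding precisely $\|\ph_m\|_{C^1}\lesssim\gamma\,r_m^{n+3}$ as in \eqref{e.rate-0}. Convexity of $E_m$ follows by bootstrapping the same equation to a stronger bound $\|\ph_m\|_{C^2}/r_m\ll 1$.

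Parts (ii) and (iii) are then obtained by matching asymptotics. Using the expansion of $F_{\gamma,m}(\chi_{B_{r_m}(p)})$ recalled in the introduction, and the fact that the correction produced by $\ph_m$ enters only at order $\gamma^2 r_m^{2n+4}$ (quadratic in $\ph_m$, because the spherical ball is a critical point of the perimeter at fixed volume), the energy of $u_m$ coincides with that of $\chi_{B_{r_m}(p_m)}$ up to errors well below those in \eqref{e.energy-0}. The only $p$-dependent term at the relevant scale is $\omega_n^2\,g_{r_m}(p)\,r_m^{2n}$, whose leading order as $r_m\to 0$ is $\omega_n^2\,h(p)\,r_m^{2n}$ with $h$ the Robin function of $\Om$. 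Minimality of $p_m$ then forces $\dist(p_m,\cH)\to 0$, giving (ii), and substituting $\min_\Om h$ yields (iii).

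Finally, for the rigidity statement (iv), if $E_m$ is an exact ball then $H_{\de E_m}$ is constant, hence the Euler--Lagrange equation forces $v_{E_m}$ to be constant on $\de E_m$; since $-\Delta v_{E_m}=\chi_{E_m}-m$ in $\Om$ with Neumann boundary condition and zero average, the function $v_{E_m}$ on $\Om\setminus\overline{E_m}$ solves an overdetermined boundary value problem of Serrin type, whose moving-plane analysis forces $\Om$ to be a ball concentric with $E_m$. Conversely, if $\Om=B_R$, the symmetry of the Green function together with the strict convexity of the second variation around the concentric configuration (granted by the small-parameter regime) gives that the concentric ball $B_{r_m}$ is the unique minimizer. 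The main technical obstacle I expect is the sharp rate in (i): cleanly decoupling the rigid-motion modes in $\ph_m$ through an optimal centering of $p_m$, and bounding the tangential variation of $v_{E_m}$ exactly at order $\gamma\,r_m^{n+1}$ and not coarser, is what ultimately enforces the correct exponents throughout (ii)--(iv).
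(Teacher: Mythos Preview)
Your overall architecture is sound and largely matches the paper, but there is one genuine gap and one genuinely different choice worth highlighting.

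\textbf{The missing localization step.} You pass directly from ``$E_m$ is a $(\Lambda,r_0)$-perimeter minimizer'' to ``$\de E_m$ is a $C^{1,\asym}$ graph over $\de B_{r_m}(p_m)$ for a suitable $p_m\in\Om$''. This elides what the paper treats as a separate, nontrivial step (Proposition~\ref{p.local}): showing that $E_m$ is \emph{well contained} in $\Om$, i.e.\ $E_m\subset B_{3r_m}(q)$ with $q\in\Om_{r_0}$. Without this, two things fail. First, the optimal isoperimetric ball $B^{opt}_{E_m}$ need not lie in $\Om$, so it is not even an admissible competitor and the quantitative isoperimetric inequality does not feed back into an energy comparison. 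Second, the $\Lambda$-minimality regularity you invoke is interior; near $\de\Om$ you have no uniform control. The paper handles this by (a) pushing $B^{opt}_{E_m}$ inside $\Om$ via a geometric lemma, (b) using the blow-up of the Robin function at $\de\Om$ to gain a first lower bound on $\dist(E_m,\de\Om)$, (c) using interior density estimates to rule out stray components, and (d) translating $E_m$ to show the center sits in a compact of $\Om$. Your sketch contains none of this, and it is precisely here that the Neumann boundary conditions and the Green-function estimates earn their keep.

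\textbf{A different route to the sharp rate.} For \eqref{e.rate-0} you linearize the Euler--Lagrange equation and invert the Jacobi operator on $(\ker)^{\perp}$, gaining $r_m^2$ from the inverse and reading off the $\gamma\,r_m^{n+1}$ forcing from the regular part of $G$. The paper instead bootstraps the quantitative isoperimetric inequality: a first pass gives $|E_m\sdif B^{opt}_{E_m}|\lesssim \gamma r_m^{n+3}$ (or with a log in $n=2$); then an \emph{improved} Lipschitz estimate for $\NL$ on nearly-spherical sets (Proposition~\ref{p.deficit}), combined once more with \eqref{e.iso2}, upgrades this to $|E_m\sdif B^{opt}_{E_m}|\lesssim \gamma r_m^{2n+2}$, from which $\|\ph_m\|_{C^1}\lesssim \gamma r_m^{n+3}$ follows by $C^{3,\asym}$ regularity. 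Your PDE approach should also work once you carefully close the nonlinear remainder via a contraction argument, but the paper's route avoids that and uses only energy comparisons and the sharp exponent $2$ in the isoperimetric deficit; this is one of the places where optimality of that exponent is actually needed.

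\textbf{On (iv).} For the forward implication you invoke a Serrin/moving-planes overdetermined argument. The paper's argument is more elementary: since $H_{\de B_{r_m}}$ is constant, $v_m$ is constant on $\de B_{r_m}$; by Dirichlet uniqueness $v_m$ is radial in $B_{r_m}$, and then the Cauchy data on $\de B_{r_m}$ determine $v_m$ uniquely in $\Om\setminus B_{r_m}$ (unique continuation), forcing $v_m$ to be radial everywhere. The Neumann condition $\nabla v_m\cdot\nu=0$ on $\de\Om$ then makes $\Om$ a ball. Your approach would also succeed, but is heavier machinery than needed. For the converse, both you and the paper use strict positivity of the second variation at the concentric ball; the paper additionally needs to show the optimal center is $O(\delta_0^{1/2}r_m)$ from the origin so that $E_m$ is a small $C^{3,\asym}$ graph over $\de B_{r_m}$ itself, not just over $\de B^{opt}_{E_m}$.
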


Many of the mathematical difficulties in Theorem~\ref{t.main-0} are due to our choice to work in any dimension $n$ (previous results are mostly in dimensions $n=2,3$), and with the  standard Coulombian kernel and the natural Neumann boundary condition.
Results analogous to ours have been obtained under different simplified assumptions in \cite{ACO, ChPeI, ChPeII, FiMa, KnMu, Mu, Os, ReWe07, RenWei08}. 
More in detail, in the remarkable paper by Alberti, Choksi and Otto \cite{ACO} the authors study the uniform distribution of the energy and of the order parameter of the minimizers of $F_{\gamma,m}$ (see \cite{Sp} for analogous results in the case of the Ohta--Kawasaki functional ${\mathcal E}_{\eps,\sigma}$). In \cite{Mu} Muratov studies the shape of the minimizers and the asymptotic expansion of the energy in the case of multiple droplet patterns in two dimensions for a slightly different nonlocal interaction and periodic boundary conditions. The author consider a regime where the minimizers of $F_{\gamma,m}$ are given by the union of nearly spherical droplets whose centers minimize a pairwise interaction energy (analogous results are also proved for the functional ${\mathcal E}_{\eps,\sigma}$). In a recent preprint by Kn\"upfer and Muratov \cite{KnMu}, the authors study exact spherical solutions to a $2$-dimensional nonlocal isoperimetric problem in the whole space, where the nonlocal term is a Coulombian interaction. In \cite{ChPeI, ChPeII} Choksi and Peletier study the regime of finitely many (independent of the parameters) droplets proving a $\Gamma$-convergence expansion of $F_{\gamma,m}$ and ${\mathcal E}_{\eps,\sigma}$. In \cite{ReWe07, RenWei08} Ren and Wei construct explicit equilibria solutions, see \S~\ref{ss.elliptic} for more comments. Finally, for a different problem involving a local perturbation of the isoperimetric term, we mention the result by Figalli and Maggi in \cite{FiMa} where an analogous convergence result to single droplet minimizers is proved. 

\subsection{The role of regularity theory}\label{ss.intro-reg}
Many of the existing results regarding the shape of droplets are restricted to the two-dimensional case. This is partially due to the fact that in two dimensions the isoperimetric confinement is strong enough to allow the use of non-parametric techniques, while in higher dimensions having small perimeter does not even imply boundedness (e.g. consider a very thin tube).

One of the main contributions of this paper is to provide robust arguments to overcome this difficulty. In particular, we exploit a  combination of two facts in the regularity theory of minimal surfaces: the uniform regularity properties of minimizers and the use of the optimal quantitative isoperimetric inequality. More precisely, we show that the minimizers of $F_{\gamma,m}$ are uniform $\Lambda$-minimizers of the perimeter (see Definition \ref{d.lambda min}), thus allowing the uses of non-parametric techniques. On the other hand, once in a  uniform neighborhood of the ball, the optimal quantitative isoperimetric inequality is the natural estimate to get the asymptotic convergence for the single droplet. We remark that, to our knowledge, this is the first time when the sharp exponent $2$ in the quantitative isoperimetric inequality has an essential role, while the only case where the uniform regularity property plays a role in the same spirit is a recent paper by Acerbi, Fusco and Morini \cite{AcFuMo} where the authors study local minimizers for the functional $F_{\gamma,m}$ via second variations.

\subsection{Confined solutions to an elliptic system}\label{ss.elliptic}
Problems similar to the ones we have addressed here have been considered by Oshita \cite{Os} and by Ren and Wei \cite{ReWe07, RenWei08} in dimension $n=2,3$.
Their starting point is somehow different from ours: specifically, they are aimed to the construction of special confined solutions to elliptic systems of the form
\begin{equation}\label{e.Oshita}
\begin{cases}
-\Delta v=\chi_{E}-m & \text{in }\Omega,\\
\nabla v\cdot \nu =0 & \text{on }\de\Omega,\\
\gamma\, v+H_{\de E}=0& \text{on }\de E.
\end{cases}
\end{equation}
Here $\Omega$ is a bounded smooth domain with boundary $\partial \Omega$ having normal $\nu$, $E\subset\Omega$ is an open set whose boundary $\partial E$ is a $C^2$-manifold embedded in $\Omega$, $H_{\de E}$ is its mean curvature and $\gamma>0$ and $m\in(0,1)$ are two parameters. Note that such a system is the Euler--Lagrange equation of the functional $F_{\gamma,m}$ whenever $\chi_E$ is a smooth critical point. By exploiting a Lyapunov-Schmidt reduction procedure, in these papers the authors prove that, for a special range of the parameters $m$ and $\gamma$, there exists a stationary point of $F_{\gamma,m}$ having the shape of a single droplet. More precisely, they start from a harmonic center $p$ of $\Omega$, and by a perturbative argument they are able to find solutions to the equilibrium close to the ball centered at $p$. As a byproduct of Theorem~\ref{t.main-0} we obtaine the existence of such solutions in any space dimension as the (local) minimizers of the associated functional $F_{\gamma,m}$.

A couple of remarks on this regard are in order.
While the analysis made by Oshita is done in the somehow simpler regime $\gamma\to0$, Ren and Wei consider a different range of parameters with respect to the one considered here (in particular, they have a restrictive gap condition). Surprisingly enough, thought the techniques are different, we obtain the same rate of convergence of a single droplet minimizer to the ball in dimension $n=2$ as in \cite{ReWe07} (while we improve the rate in the case of higher dimensions).

\subsection{Variants and possible generalizations}
We point out that the techniques developed in this paper may also be applied to several other related models, which have been considered in the last years. This is because the arguments described in \ref{ss.intro-reg} do not rely strongly on the form of the isoperimetric term neither on that of the nonlocal one, but rather on energy scaling and regularity properties of minimizers.
Among the possible generalizations, we mention the extension of the results of the paper to:
\begin{itemize}
\item[(1)] multiple droplets patterns (work in progress);
\item[(2)] models presenting different Coulombian-type kernels $G$ (such as the screened kernels considered by Muratov in \cite{Mu});
\item[(3)] droplets minimizers for the Ohta--Kawasaki functional \eqref{e.OK};
\item[(4)] anisotropic perimeter functionals (for which the asymptotic shape will be the Wulff shape for the chosen anisotropy -- cp.~with \cite{FiMa});
\item[(5)] \textit{nonlocal perimeters}, as those considered by Carlen et al.~in \cite{Ca}. 
\end{itemize}

\bigskip

The paper is organized as follows.
In \S~\ref{s.notation} we fix the notation used throughout the paper and recall some known preliminary results which will be used in the proof of Theorem~\ref{t.main-0}. In \S~\ref{s.reg} we prove the Lipschitz continuity of the nonlocal part of the energy, deriving the first regularity conclusions such as the almost minimality of the minimizers. Then in the short section \S~\ref{s.torus} we show how this observation leads to the main result of this paper in the simpler case when the natural Neumann boundary condition are replaced by periodic boundary conditions. We postpone the proofs of the general case to \S~\ref{s.round}. In \S~\ref{s.stable} we discuss the existence of perfectly spherical solutions, showing how the regularity plays a role also in the study of the stability. The final Appendix is devoted to the proofs of some estimates on the Green function in \eqref{e.introG} we use through the paper.

%%%%%%%%%%%%%%% Notation and results %%%%%%%%%%%%%%%%%%%%%%%%%%%%%%%
%%%%%%%%%%%%%%%%%%%%%%%%%%%%%%%%%%%%%%%%%%%%%%%%%%%%%%%%%%%%%%%%%%%%
%%%%%%%%%%%%%%%%%%%%%%%%%%%%%%%%%%%%%%%%%%%%%%%%%%%%%%%%%%%%%%%%%%%%
%%%%%%%%%%%%%%%%%%%%%%%%%%%%%%%%%%%%%%%%%%%%%%%%%%%%%%%%%%%%%%%%%%%%
%%%%%%%%%%%%%%%%%%%%%%%%%%%%%%%%%%%%%%%%%%%%%%%%%%%%%%%%%%%%%%%%%%%%
\section{Notation and preliminaries}\label{s.notation}
In what follows $\Omega\subset\R^n$ is a bounded open set with $C^2$ boundary $\de \Omega$. 
For given constants $m\in(0,1)$ and $\gamma>0$, we consider the sharp interface limit of the Ohta--Kawasaki functional $F_{\gamma, m}$ which can be written in the following way:
\begin{equation}\label{e.OK-sharp}
F_{\gamma,m}(u):=\int_{\R^n} |D u|+\gamma\int_\Omega\int_\Omega G(x,y)\,\big(u(x)-m\big)\big(u(y)-m\big)\,dx\,dy.
\end{equation}

In the above expression, the order parameter $u$ belongs to the class $\cC_m(\Omega)$ (often we will simply write $\cC_m$) of functions with bounded variation taking values in $\{0,1\}$, whose average in $\Omega$ is $m$ and which are constantly $0$ outside $\Omega$:
\begin{equation}\label{e.Cm}
\cC_m:=\left\{u\in BV(\R^n,\{0,1\})\,:\,\fint_\Omega u=m,\quad u\vert_{\R^n\setminus\Omega}=0\right\}.
\end{equation}

As already noticed in the introduction, we stress once again that in the functional $F_{\gamma, m}$ the total variation of $Du$ is computed in the whole $\R^n$, thus accounting also for possible concentration of this measure (interfaces of the physical system) on the boundary of $\Omega$. In the second term in \eqref{e.OK-sharp}, $G$ denotes the Green function of the Laplacian with Neumann boundary conditions on $\de\Omega$. Denoting by $\nu$ the exterior normal to $\de \Omega$ and by $|A|$ the $n$-dimensional Lebesgue measure of a set $A$, $G$ is defined by the following boundary value problem: for every $x\in\Omega$,
\begin{equation}\label{e.G}
\begin{cases}
-\Delta G(x,\cdot)=\delta_x -\frac{1}{|\Omega|} & \text{in }\Omega,\\
\nabla G(x,\cdot)\cdot \nu =0 & \text{on }\de\Omega,\\
\int_{\Omega}G(x,y)\,dy=0.
\end{cases}
\end{equation}

In place of the average $m$, we will often make use of the parameter $r_m$ corresponding to the radius of a ball whose volume fraction in $\Omega$ is $m$, i.e.
\[
\omega_n\,r_m^n:=m\,|\Omega|.
\]
Moreover, we will often identify $u\in\cC_m$ with the set of finite perimeter $E$ such that $u=\chi_E$ (see \cite{AFP, Gi}). We will write the energy $F_{\gamma, m}$ depending on $E$ in the following way:
\[
F_{\gamma,m}(E):=\Per(E)+\gamma\,\NL(E),
\]
where $\Per(E)=\int_{\R^n}|D\chi_E|$ is the perimeter of $E$ in $\R^n$ and NL is the nonlocal part of the energy. Note that, thanks to $\int_\Omega G(x, y)\,dy=0$ for every $x\in \Omega$, we may rewrite the nonlocal term as
\begin{align}\label{e.NL}
\NL(E):
=\int_\Omega\int_\Omega G(x,y)\,\chi_E(x)\,\chi_E(y)\,dx\,dy.
\end{align}

Finally, we fix the following convention regarding the constants we use in the formula.
Every time we will use the letter $C$ for a constant, this is assumed to be positive and depending only on the dimension of the space $n$ and the domain $\Omega$.
When possible, we will use the simbols $a\lesssim b$, $a\gtrsim b$ and $a\simeq b$ for $a\leq C\,b$, $a\geq C\,b$ and $C^{-1}\leq a\leq C\,b$, respectively.
When we do need to keep track of the constants, we will number them accordingly.

\subsection{Robin function and harmonic centers}
Here we recall some basic facts about the Green function $G$.
Let $\Gamma$ be the fundamental solution of the Laplacian, i.e.
\begin{equation}\label{e.Gamma}
\Gamma(t):=
\begin{cases}
\displaystyle\frac{\log t}{2\pi} & \text{if}\;n=2,\\
\displaystyle\frac{t^{2-n}}{n(2-n)\omega_n} & \text{if}\;n\geq3,
\end{cases}
\end{equation}
and define the regular part $R$ of the Green function in \eqref{e.G} as
\[
R(x,y):=G(x,y)+\Gamma(|x-y|).
\]

We recall that, even if in principle $R$ is not defined in $y=x$, nevertheless, for every $x\in\Omega$, $R(x,\cdot)$ solves the following boundary value problem:
\begin{equation}\label{e.robin}
\begin{cases}
\Delta R(x,\cdot)=\frac{1}{|\Omega|} & \text{in }\Omega,\\
\nabla R(x,\cdot)\cdot \nu = \nabla \Gamma(|x-\cdot|)\cdot \nu & \text{on }\de\Omega.
\end{cases}
\end{equation}
This implies that $R(x,\cdot)$ is an analytic function in the whole $\Omega$ and, hence, it makes sense to consider its extension in $y=x$:
\[
h(x):=R(x,x).
\]
The function $h$ is called the Robin function. As it can be easily seen from\eqref{e.robin} $h$ turns out to be analytic in $\Omega$.

Several estimates on the regular part of the Green function and on the Robin function will play an important role in the identification of the concentration points for the minimizers of $F_{\gamma, m}$. The following facts will be used in the proofs: there exists $r_0$ depending only on $\Omega$ such that, for all $r\leq r_0$
\begin{equation}\label{e.Robin}
|R(x,y)|\simeq |\Gamma(r)| \quad \forall\;x,\ y\ :\ \dist(x,\de\Omega)+\dist(y,\de\Omega)\simeq r.
\end{equation}
Moreover, from \eqref{e.Robin} we deduce also:
\begin{gather}
|G(x,y)|\lesssim -\,\Gamma(|x-y|)+1,\quad\forall\;x,y\in\Omega\label{e.robin0}\\
h(x)\simeq \left|\Gamma\big(\dist(x,\de\Omega)\big)\right|,\quad\forall\; x\in\Omega\setminus\Omega_{r_0},\label{e.robin2}
\end{gather}
where, for every $r>0$, we denote by $\Omega_{r}$ the complement in $\Omega$ of the closed $r$-neigborhood of $\de\Omega$, that is 
\begin{equation}\label{e.Omega r}
\Omega_{r}:=\{x\in\Omega\,:\,\dist(x,\de\Omega)>r\}.
\end{equation}
These estimates are well-known (see for example \cite{Fl} in the case of Dirichlet boundary condition). For readers' convenience we prove these estimate in the Appendix \ref{a.Robin}. From the regularity of $h$ and \eqref{e.robin2}, it follows that $h$ is bounded from below.
In particular, since $h$ is analytic and blows up on the boundary of $\Omega$ (hence, in particular it has no constant directions), it follows that the set of minimum points of $h$ is finite: we denote this set by $\cH$ and call them the \textit{harmonic centers} of $\Omega$.
% 
% %%%%%%%%%%%%%%% Preliminaries %%%%%%%%%%%%%%%%%%%%%%%%%%%%%%%%%%%%%%
% %%%%%%%%%%%%%%%%%%%%%%%%%%%%%%%%%%%%%%%%%%%%%%%%%%%%%%%%%%%%%%%%%%%%
% %%%%%%%%%%%%%%%%%%%%%%%%%%%%%%%%%%%%%%%%%%%%%%%%%%%%%%%%%%%%%%%%%%%%
% %%%%%%%%%%%%%%%%%%%%%%%%%%%%%%%%%%%%%%%%%%%%%%%%%%%%%%%%%%%%%%%%%%%%
% %%%%%%%%%%%%%%%%%%%%%%%%%%%%%%%%%%%%%%%%%%%%%%%%%%%%%%%%%%%%%%%%%%%%
% \section{Preliminaries}\label{s.prel}

\subsection{The quantitative isoperimetric inequality}
The classical isoperimetric inequality states that the perimeter of any measurable set $E$ is bigger than the perimeter of a ball $B_E$ having the same volume as $E$:
\begin{equation}\label{e.iso}
\Per(E)- \Per(B_E)\geq 0,
\end{equation}
with equality only in the case $E$ is itself a ball. Quantitative versions of \eqref{e.iso}, also called {\it Bonnesen-type inequalities} \cite{Osserman79}, have been widely studied (see, for instance, \cite{Hall92, HalHayWei91}). The following, called sharp quantitative isoperimetric inequality, has been proved in \cite{CicLeo10, FiMaPr, FuMaPr}.

\begin{proposition}[Sharp quantitative isoperimetric inequality]\label{p.qip}
There exists a dimensional constant $C=C(n)>0$ such that for every set $E\subset\R^n$ of finite measure, it holds
\begin{equation}\label{e.qip}
C\,\asym(E)^2\leq \frac{\Per(E)-\Per(B_E)}{\Per(B_E)},
\end{equation}
where $\asym(E)$ is the {\it Frankel asymmetry} of $E$ (see \cite{HalHayWei91})
\begin{equation*}
  \asym(E) = \inf\left\{ \frac{|E\sdif (x+B_E)|}{|B_E|},\ x\in
    \R^n\right\}.
\end{equation*}
\end{proposition}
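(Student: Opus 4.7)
The plan is to argue by contradiction via a selection/regularity scheme in the spirit of Cicalese--Leonardi. By scaling invariance I normalize $|E| = \omega_n$, so that $\Per(B_E) = n\omega_n$ is constant. Assuming the claim fails, I find a sequence $\{E_k\}$ of unit-volume sets with
\[
\asym(E_k)^2 \;>\; k\,\big(\Per(E_k) - n\omega_n\big).
\]
In particular $\Per(E_k)\to n\omega_n$, so by $BV$-compactness and the classical isoperimetric inequality, up to translations $E_k\to B_1$ in $L^1$ and $\asym(E_k)\to 0$.

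The heart of the argument is a \emph{selection step}: I replace each $E_k$ by a set $F_k$ that still violates the quantitative inequality (with a slightly smaller but still diverging constant) and in addition is a uniform $\Lambda$-minimizer of the perimeter. A convenient choice is to take $F_k$ as a solution of
\[
\min\Big\{\Per(F) + \Lambda_k\,\big|\asym(F) - \asym(E_k)\big| \,:\, |F|=\omega_n,\ |F\sdif E_k|\le \delta\Big\},
\]
for a small fixed $\delta>0$ and $\Lambda_k\to+\infty$ chosen slowly. Existence follows from the direct method together with the $L^1$-continuity of $\asym$; the penalization pins $\asym(F_k)$ near $\asym(E_k)$, while minimality gives $\Per(F_k)\le \Per(E_k)$, so the quantitative violation is transferred from $E_k$ to $F_k$ up to a harmless factor. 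Comparing $F_k$ against perturbations by compactly supported diffeomorphisms yields $\Lambda$-minimality with a $k$-independent $\Lambda$.

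Since the uniformly $\Lambda$-minimizing sets $F_k$ converge in $L^1$ to $B_1$, De Giorgi-type regularity theory for $\Lambda$-minimizers of the perimeter gives $\partial F_k \to \partial B_1$ in $C^{1,\beta}$. Thus eventually $F_k$ is a \emph{nearly spherical set}, $\partial F_k = \{(1+\varphi_k(x))\,x : x\in\mathbb{S}^{n-1}\}$ with $\|\varphi_k\|_{C^1}\to 0$, and (after translating) with barycenter at the origin. Fuglede's quadratic expansion of the perimeter then yields
\[
\Per(F_k) - n\omega_n \;\gtrsim\; \|\varphi_k\|_{H^1(\mathbb{S}^{n-1})}^2,
\]
while the elementary estimate $\asym(F_k)\lesssim \|\varphi_k\|_{L^1}\lesssim \|\varphi_k\|_{H^1}$ converts this into the sharp quadratic inequality for $F_k$, contradicting the violation inherited from $E_k$.

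The main obstacle is the selection step: the parameters $\Lambda_k$ and $\delta$ must be calibrated so that the penalized problem admits a solution, the minimizer stays close enough to $B_1$ for the regularity machinery to apply, and the quantitative violation is preserved (up to a uniform factor) when passing from $E_k$ to $F_k$. Fuglede's inequality and the regularity of $\Lambda$-minimizers are essentially off-the-shelf; the delicate novelty is the construction of the selected sequence, which is what drives the whole contradiction scheme.
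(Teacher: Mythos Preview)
The paper does not prove this proposition at all: it is stated as a known result, with the proof outsourced to the references \cite{CicLeo10, FiMaPr, FuMaPr}. So there is no ``paper's own proof'' to compare against. Your sketch is essentially the selection-principle argument of Cicalese--Leonardi (one of the cited references), so in spirit you are reproducing one of the proofs the paper invokes.

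One technical point in your selection step deserves care. You write that $\Lambda_k\to+\infty$ (``chosen slowly'') and then claim that comparison with local perturbations yields $\Lambda$-minimality with a $k$-\emph{independent} $\Lambda$. As stated this does not follow: since $\asym$ is Lipschitz with respect to the $L^1$ metric, the penalization $\Lambda_k\,|\asym(F)-\asym(E_k)|$ produces an almost-minimality constant of order $\Lambda_k$, which blows up. The standard way around this (and what is actually done in \cite{CicLeo10}) is to take a \emph{fixed} large $\Lambda$ and argue, via a penalization/absorption lemma, that for $k$ large the minimizer $F_k$ must satisfy $\asym(F_k)=\asym(E_k)$ exactly, so that $\Per(F_k)\le \Per(E_k)$ and the violation transfers cleanly. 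With that correction the rest of your outline (uniform $\Lambda$-minimality $\Rightarrow$ $C^{1,\beta}$ convergence to $\partial B_1$ $\Rightarrow$ Fuglede's nearly-spherical inequality $\Rightarrow$ contradiction) is correct.
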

\noindent Here, $V\sdif W = (V\setminus W)\cup (W\setminus V)$ is the symmetric difference between $V$ and $W$. For any given $E\subset\R^n$ measurable set of positive and finite measure, we say that $B_E^{opt}$ is an {\it optimal ball} for $E$ if $|B_E^{opt}|=|E|$ and
\begin{equation*}
  \frac{|E\sdif B_E^{opt}|}{|B_E^{opt}|}=\asym(E). 
\end{equation*} 
The center of an optimal ball will also be referred to as an {\it optimal center}. In general the optimal ball may not be unique but, as proven in \cite[Lemma 6.4]{CicLeo11}, by an elementary application of the Brunn-Minkowsky inequality, whenever $E$ is a strictly convex set the optimal ball is actually unique. 
We finally observe that, denoting by $r$ the radius of $B_E$, \eqref{e.qip} scales in $r$ as follows:
\begin{equation}\label{e.iso2}
|E\sdif B_E^{opt}|^2\lesssim\,r^{n+1}\,\big(\Per(E)-\Per(B_E^{opt})\big).
\end{equation}

\subsection{First variations}\label{ss.first variations}
The first variations of $F_{\gamma, m}$ have been computed for regular sets by Muratov \cite{Mu} in dimension $2$ and $3$, and then in any dimension by Choksi and Sternberg \cite{ChSt}. Given a critical point $E$ of $F_{\gamma,m}$ and $x\in\de E$ a regular point of its boundary, the Euler--Lagrange equation of $F_{\gamma, m}$ at $E$ in $B_r(x)$ is given by:
\begin{equation}\label{e.first var}
H_{\de E}+4\,\gamma\,v=c,
\end{equation}
where $H_{\de E}$ denotes the scalar mean curvature of $\de E$ (namely, $H_{\de E}=\textup{div}\, \nu_{E}$, with $\nu_{E}$ the outer normal to $\de E$), $c\in\R$ is a constant coming from a Lagrange multiplier and $v$ is the solution of the following boundary value problem:
\begin{equation}\label{e.potential}
\begin{cases}
-\Delta v=\chi_E-m & \text{in }\Omega,\\
\nabla v\cdot \nu =0 & \text{on }\de\Omega,\\
\int_\Omega v=0.&
\end{cases}
\end{equation}

Since $\|\chi_E-m\|_{L^\infty}\leq 1$, it follows that $v\in C^{1,\alpha}$ for every $\alpha\in(0,1)$. Therefore, from standard elliptic estimates for the quasilinear mean curvature operator (see \cite{GT}), \eqref{e.first var} implies that, for every critical point $E$, $\de E$ is $C^{3,\alpha}$ for every $\alpha\in(0,1)$ in a neighborhood of a regular point. As shown in the next section, every minimizer of $F_{\gamma, m}$ is regular except a singular set of Hausdorff dimension at most $n-8$ (in particular, it is empty in the physical dimensions $n=2,3$).

\subsection{Asymptotic energy of balls}\label{ss.ball}
Here we give an asymptotic expansion of the energy of small round balls in $\Omega$. Let $\Omega_{r}$ be defined as in \eqref{e.Omega r}. By the regularity assumption on $\de\Omega$, there exists $r_0>0$ such that, for every $r\leq r_0$ and $p\in \Omega_{r}$, the ball $B_r(p)\in\cC_{\omega_n r^n}$. By a direct computation, 
\begin{align}\label{e.exp ball}
F_{\gamma,\omega_nr^n}(B_r(p))&=\Per(B_r(p))+\gamma\NL(B_r(p))\notag\\
&= n\,\omega_n\,r^{n-1}+\gamma\int_{B_r(p)}\int_{B_r(p)}\Gamma(|x-y|)\,dx\,dy+\gamma\int_{B_r(p)}\int_{B_r(p)}R(x,y)\,dx\,dy\notag\\
&=
\begin{cases}
2\,\pi\,r_m+\gamma\,\left(\frac{\pi}{2}\,r_m^4\,\log r+\left(\pi^2\,g_{r_m}(p)-\frac{3\,\pi}{8}\right)\,r_m^4\right), &\text{if }\;n=2,\\
n\,\omega_n\,r^{n-1}+\gamma\, \frac{2\,\omega_n\,r^{2n}}{4-n^2}+\gamma\, g_r(p)\,(\omega_n\,r^{n})^2, &\text{if }\;n\geq 3, 
\end{cases}
\end{align}
where $g_r:\Omega_r\to\R$ is given by
\begin{equation}\label{e.gr}
g_r(p):=\fint_{B_r(p)}\fint_{B_r(p)}R(x,y)\,dx\,dy.
\end{equation}
In the following lemma we show that $g_r$ converges uniformly to the Robin function $h$ as $r\to0$.

\begin{lemma}\label{l.balls}
Let $\Omega\subset\R^n$ be a bounded open set with $C^2$ boundary.
Then, there exists $r_0>0$ such that, for all $r<r_0$, 
\begin{equation}\label{e.g0}
\|g_r-h\|_{L^\infty(\Omega_{r})}\simeq r^2,
\end{equation}
and, for every $r\leq r_0/2$, 
\begin{equation}\label{e.g}
g_r(p)\simeq |\Gamma(\dist(p,\de\Omega))|\quad\forall\;p\in \Omega_{r}\setminus\Omega_{2r}.
\end{equation}
\end{lemma}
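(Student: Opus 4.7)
The plan is to exploit the fact that, by \eqref{e.robin} and the symmetry $R(x,y)=R(y,x)$, one has $\Delta_x R(\cdot,y)=\Delta_y R(x,\cdot)=1/|\Omega|$ in $\Omega$. Consequently, for any fixed $p\in\Omega$ and any $y\in\Omega$, the map $x\mapsto R(x,y)-|x-p|^2/(2n|\Omega|)$ is \emph{harmonic} on the whole of $\Omega$. This reduces the double average defining $g_r$ to an exact identity (with no Taylor-type error) via the mean value property, which is what I would use to prove both \eqref{e.g0} and \eqref{e.g}.

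More precisely, for $p\in\Omega_r$ the ball $B_r(p)$ is contained in $\Omega$, so the mean value property applied to the harmonic function above, together with the elementary computation $\fint_{B_r(p)}|x-p|^2\,dx=nr^2/(n+2)$, gives
\[
\fint_{B_r(p)}R(x,y)\,dx=R(p,y)+\frac{r^2}{2(n+2)|\Omega|}\qquad\forall\,y\in\Omega.
\]
Iterating the same argument in the second variable (now using $\Delta_y R(p,\cdot)=1/|\Omega|$) I would obtain the key identity
\[
g_r(p)=h(p)+\frac{r^2}{(n+2)|\Omega|}\qquad\forall\,p\in\Omega_r,
\]
from which \eqref{e.g0} is immediate, since the right-hand side differs from $h(p)$ by the constant $r^2/((n+2)|\Omega|)\simeq r^2$.

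For \eqref{e.g} I would combine this identity with the boundary behavior \eqref{e.robin2}. If $p\in\Omega_r\setminus\Omega_{2r}$ and $r\leq r_0/2$, then $r<\dist(p,\de\Omega)\leq 2r\leq r_0$, so \eqref{e.robin2} gives $h(p)\simeq|\Gamma(\dist(p,\de\Omega))|\simeq|\Gamma(r)|$ by monotonicity of $|\Gamma|$ near the origin. Since $|\Gamma(r)|\to\infty$ as $r\to 0$ — like $|\log r|$ when $n=2$ and like $r^{2-n}$ when $n\geq 3$ — the correction $r^2/((n+2)|\Omega|)$ is negligible compared to $h(p)$, and therefore $g_r(p)\simeq h(p)\simeq|\Gamma(\dist(p,\de\Omega))|$. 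The only subtle point in the whole argument is spotting the trick in the first step: a plain Taylor expansion of $R$ around $(p,p)$ would leave a remainder controlled by the fourth derivatives of $R$, which blow up as $p\to\de\Omega$ and would prevent a uniform $L^\infty$ bound on $\Omega_r$; by instead absorbing the constant inhomogeneity $1/|\Omega|$ into an explicit polynomial, the mean value property turns the would-be asymptotic expansion into an equality, and the rest is bookkeeping.
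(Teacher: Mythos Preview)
Your proof is correct and takes a genuinely different route from the paper's. The paper proves \eqref{e.g0} by Taylor expanding $R(p+x,p+y)$ to second order around $(p,p)$, using that odd moments over $B_r$ vanish and that the diagonal second moments reduce to $\Delta_y R(p,p)=1/|\Omega|$, to obtain $g_r(p)-h(p)=c\,r^2+o(r^2)$; \eqref{e.g} is then deduced from \eqref{e.g0} and \eqref{e.robin2}, exactly as you do.

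Your approach replaces the Taylor expansion by the mean value property: since $\Delta_x R(\cdot,y)=\Delta_y R(x,\cdot)=1/|\Omega|$ on all of $\Omega$, subtracting $|x-p|^2/(2n|\Omega|)$ produces a harmonic function and yields the \emph{exact} identity $g_r(p)=h(p)+r^2/((n+2)|\Omega|)$ on $\Omega_r$. This is strictly cleaner than the paper's argument: the Taylor remainder $o(r^2)$ in the paper is controlled by third/fourth derivatives of $R$ near $(p,p)$, and these are \emph{not} uniformly bounded as $p$ ranges over $\Omega_r$ (they blow up as $p\to\de\Omega$), so the paper's statement that the estimate holds in $L^\infty(\Omega_r)$ is not fully justified by the expansion it writes. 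Your identity sidesteps this issue entirely and even pins down the constant. The only thing worth making explicit is that the symmetry $R(x,y)=R(y,x)$ together with \eqref{e.robin} indeed gives $\Delta_x R(\cdot,y)=1/|\Omega|$ across the diagonal (since $R(x,\cdot)$ extends smoothly through $x$ by elliptic regularity), which you already note.
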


\begin{proof}
To show \eqref{e.g0}, let $r_0$ be as in \eqref{e.Robin} and note that, since $R\vert_{\Omega_{r_0}\times \Omega_{r_0}}$ is analytic, we have
\begin{align}\label{e.R exp}
g_r(p)-h(p)&=\fint_{B_r}\fint_{B_r}\left(R(x+p,y+p)-R(p,p)\right)\,dx\,dy\notag\\
&=\fint_{B_r}\fint_{B_r}\left(DR(p,p)(x,y)+\langle D^2R(p,p)\,(x,y),(x,y)\rangle\right)\,dx\,dy+o(r^2)\notag\\
&=r^2\fint_{B_1}\fint_{B_1}\langle D^2R(p,p)\,(x,y),(x,y)\rangle\,dx\,dy+o(r^2),
\end{align}
where in the last equality we used that the linear term integrates to $0$.
By the linearity of the integral and of the scalar product, it follows that 

\begin{align}\label{e.R exp-1}
g_r(p)-h(p)&=\sum_{i,j}\Big(\de_{x_i}\de_{x_j}R(p,p)\,A_{x_i x_j}+2\,\de_{x_i}\de_{y_j}R(p,p)\,A_{x_i y_j}+\de_{y_i}\de_{y_j}R(p,p)\,A_{y_i y_j}\Big),
\end{align}
where 
\begin{align*}
A_{x_i x_i}=A_{y_i y_i}=\mu:=\fint_{B_1}x_1^2\,dx\quad\text{and}\quad A_{x_i x_j}=A_{x_i y_j}=A_{y_i y_j}=0.
\end{align*}
By the simmetry $R(x,y)=R(y,x)$, we infer from \eqref{e.R exp-1} that
\begin{align*}
g_r(p)-h(p)={\rm Tr}\,\big(D^2R(p,p)\big)\, r^2+o(r^2)=
2\,\mu\,\Delta R(p,p)\,r^2+o(r^2)=\frac{2\,\mu\,r^2}{|\Omega|}+o(r^2),
\end{align*}
thus leading to \eqref{e.g0}. The proof of \eqref{e.g} follows from \eqref{e.g0} and \eqref{e.robin2}.
\end{proof}

%%%%%%%%%%%%%%% Regularity %%%%%%%%%%%%%%%%%%%%%%%%%%%%%%%%%%%%%%%%%
%%%%%%%%%%%%%%%%%%%%%%%%%%%%%%%%%%%%%%%%%%%%%%%%%%%%%%%%%%%%%%%%%%%%
%%%%%%%%%%%%%%%%%%%%%%%%%%%%%%%%%%%%%%%%%%%%%%%%%%%%%%%%%%%%%%%%%%%%
%%%%%%%%%%%%%%%%%%%%%%%%%%%%%%%%%%%%%%%%%%%%%%%%%%%%%%%%%%%%%%%%%%%%
%%%%%%%%%%%%%%%%%%%%%%%%%%%%%%%%%%%%%%%%%%%%%%%%%%%%%%%%%%%%%%%%%%%%
\section{Regularity of minimizers}\label{s.reg}
In this section we prove the Lipschitz continuity of the nonlocal term, from which we derive the uniform regularity properties of the minimizers of $F_{\gamma,m}$ in the relevant regimes for the parameters $\gamma$ and $m$.

\subsection{Lipschitz continuity of NL}
Proofs of the Lipschitz continuity of $\NL$ already appeared in the literature (see, for instance, \cite{AcFuMo,Mu, ST}). For our purposes, a more careful quantitative estimate of the Lipschitz constant is necessary.

\begin{proposition}\label{p.quasi min}
For every $\chi_{E_m},\chi_{{G_m}}\in\cC_m$, setting $w=\Gamma*\chi_{G_m}$, it holds
\begin{equation}\label{e.NL cont}
NL(G_m)-NL(E_m)\lesssim (\|w\|_{L^\infty(E_m\sdif {G_m})}+|G_m|)\,|E_m\sdif {G_m}|.
\end{equation}
\end{proposition}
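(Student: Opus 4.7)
The plan is to write $\NL(G_m)-\NL(E_m)$ as a linear term plus a nonnegative quadratic term in $f:=\chi_{G_m}-\chi_{E_m}$, discard the quadratic piece, and estimate the linear piece pointwise via the decomposition $G=R-\Gamma$ together with the estimate \eqref{e.robin0}. Note that $f$ is supported in $E_m\sdif G_m$ and has zero integral since $|E_m|=|G_m|=m|\Omega|$. Writing $\chi_{E_m}(x)\chi_{E_m}(y)=(\chi_{G_m}(x)-f(x))(\chi_{G_m}(y)-f(y))$ and using the symmetry of $G$, one obtains
\[
\NL(G_m)-\NL(E_m)=2\int_\Omega f(x)\,V_{G_m}(x)\,dx-\int_\Omega\int_\Omega G(x,y)\,f(x)\,f(y)\,dx\,dy,
\]
where $V_A(x):=\int_\Omega G(x,y)\chi_A(y)\,dy$.

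Next I would observe that the quadratic form is nonnegative: since $\int f=0$, the function $u(x):=\int_\Omega G(x,y)f(y)\,dy$ is the zero-mean solution of $-\Delta u=f$ in $\Omega$ with Neumann conditions on $\de\Omega$, so integration by parts gives $\int_\Omega f\,u\,dx=\int_\Omega|\nabla u|^2\,dx\geq 0$. Discarding this term and using that $f$ is supported in $E_m\sdif G_m$,
\[
\NL(G_m)-\NL(E_m)\leq 2\int_{E_m\sdif G_m}|V_{G_m}(x)|\,dx.
\]

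The bulk of the argument is then the pointwise estimate $|V_{G_m}(x)|\lesssim |w(x)|+|G_m|$. Using the decomposition $G=R-\Gamma$ and the bound $|G(x,y)|\lesssim|\Gamma(|x-y|)|+1$ provided by \eqref{e.robin0},
\[
|V_{G_m}(x)|\leq\int_\Omega|G(x,y)|\chi_{G_m}(y)\,dy\lesssim\int_{G_m}|\Gamma(|x-y|)|\,dy+|G_m|.
\]
In dimension $n\geq 3$ we have $\Gamma\leq 0$, so $\int_{G_m}|\Gamma(|x-y|)|\,dy=-w(x)=|w(x)|$ and the bound is immediate. In dimension $n=2$, $\Gamma$ changes sign at $|x-y|=1$; splitting the integral at this threshold, the part where $|x-y|\geq 1$ contributes $\lesssim|G_m|$ by the boundedness of $\Omega$, and one then checks that the remaining part is controlled by $|w(x)|+|G_m|$ (the difference of the two signed contributions is $|w|$, while their sum is at most twice the ``far'' term plus $|w|$).

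Inserting this pointwise control into the previous inequality gives
\[
\NL(G_m)-\NL(E_m)\lesssim\int_{E_m\sdif G_m}(|w(x)|+|G_m|)\,dx\leq\bigl(\|w\|_{L^\infty(E_m\sdif G_m)}+|G_m|\bigr)|E_m\sdif G_m|,
\]
which is the claim. The main obstacle is the third step: one has to show simultaneously that the regular part $R$ and the non-singular far field of $\Gamma$ get absorbed in the $|G_m|$ summand through \eqref{e.robin0}, while the singular contribution of $\Gamma$ is captured by $|w|$, a matter that in dimension $2$ requires a careful accounting of the sign of $\Gamma$.
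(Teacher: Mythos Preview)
Your argument is correct and follows essentially the same route as the paper's proof: both expand $\NL(G_m)-\NL(E_m)$ into a linear term $2\int f\,V_{G_m}$ minus the quadratic form $\int\!\!\int G\,f\,f$, identify the quadratic form with $\int_\Omega|\nabla z|^2\geq 0$ via the Neumann problem, and then control the linear term using \eqref{e.robin0}. The only cosmetic difference is that the paper integrates in the $G_m$-variable first and writes the bound directly as $\int(|G_m|-w)\,|f|$, while you phrase it as a pointwise estimate $|V_{G_m}(x)|\lesssim |w(x)|+|G_m|$; these are the same computation. Your explicit splitting at $|x-y|=1$ in dimension $n=2$ is a bit more careful than the paper, which simply absorbs the bounded positive part of $-\Gamma$ on the bounded domain $\Omega$ into the implicit constant of the $\lesssim$.
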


\begin{proof}
We start from \eqref{e.NL} to get
\begin{align}\label{e.NL diff}
\NL({G_m})-\NL(E_m){}={}&\int_\Om\int_\Om {G}(x,y)\big(\chi_{G_m}(x)\,\chi_{G_m}(y)-\chi_{E_m}(x)\,\chi_{E_m}(y)\big)\ dx\ dy\notag\\
={} &\int_\Om\int_\Om {G}(x,y)\chi_{G_m}(x)\,\big(\chi_{G_m}(y)-\chi_{E_m}(y)\big)\ dx\ dy+\notag\\
&+\int_\Om\int_\Om {G}(x,y)\,\chi_{E_m}(y)\big(\chi_{G_m}(x)-\chi_{E_m}(x)\big)\ dx\ dy\notag\\
={} & 2\int_\Om\int_\Om {G}(x,y)\,\chi_{G_m}(x)\,\big(\chi_{G_m}(y)-\chi_{E_m}(y)\big)\ dx\ dy-&\notag\\
&-\int_\Om\int_\Om {G}(x,y)\,\big(\chi_{G_m}(y)-\chi_{E_m}(y)\big)\cdot\notag\\
&\hspace{3.5cm}\cdot\big(\chi_{G_m}(x)-\chi_{E_m}(x)\big)\ dx\ dy,
\end{align}
where in the last equality we used the symmetry $G(x,y)=G(y,x)$.
Since
\begin{equation*}
\int_\Om\int_\Om {G}(x,y)\,\big(\chi_{G_m}(y)-\chi_{E_m}(y)\big)
\cdot\big(\chi_{G_m}(x)-\chi_{E_m}(x)\big)\ dx\ dy=\int_\Omega|\nabla z(x)|^2\,dx\geq 0,
\end{equation*}
where $z$ solves
\[
\begin{cases}
-\Delta z=\chi_{G_m}-\chi_{E_m} & \text{in }\;\Omega,\\
\nabla z\cdot\nu=0 & \text{on }\;\de \Omega,
\end{cases}
\]
we deduce:
% $|G(x,y)|\leq C\Gamma(|x-y|)$ (see Lemma~\ref{l.green} in the Appendix), 
\begin{align}\label{e.NL diff estimate}
\NL({G_m})-\NL(E_m)&\leq 2\,\int_\Om\int_\Om |G(x,y)|\,\chi_{G_m}(x)\,\big|\chi_{G_m}(y)-\chi_{E_m}(y)\big|\ dx\ dy\\
&\stackrel{\mathclap{\eqref{e.robin0}}}{\lesssim}\int_\Om\int_\Om \left( -\Gamma(|x-y|)+1\right)\,\chi_{G_m}(x)\,\big|\chi_{G_m}(y)-\chi_{E_m}(y)\big|dx\, dy\notag\\
&\lesssim\int_\Om (|G_m|-w(y))\big(\chi_{G_m}(y)-\chi_{E_m}(y)\big) \,dy\notag
\\
&\lesssim (\|w\|_{L^\infty({E_m}\sdif {G_m})}+|G_m|) \,|{E_m}\sdif {G_m}|,\notag
\end{align}
that is \eqref{e.NL cont}.
\end{proof}

A straightforward consequence of Proposition~\ref{p.quasi min} is that, if $E_m$ is a minimizer of $F_{\gamma, m}$, then                                                                                       
\begin{align}\label{e.deficit_1}
\Per(E_m)-\Per({G_m})&\leq\gamma\,\big(\NL({G_m})-\NL(E_m)\big)\notag\\
&\lesssim \gamma\,(\|w\|_{L^\infty(E_m\sdif {G_m})}+|G_m|)\,|E_m\sdif {G_m}|.
\end{align}
By the radial monotonicity of $\Gamma$, it holds
\begin{equation}\label{e.monotone}
\|\Gamma * \chi_{G_m}\|_{L^\infty}\leq\|\Gamma * \chi_{B_m}\|_{L^\infty}=\
\begin{cases}
\frac{r_m^2}{2}\,\left(\frac{1}{2}-\log r_m\right) & \text{if } \;n=2,\\
\frac{r_m^{2}}{2\,(n-2)}& \text{if } \;n\geq3,
\end{cases}
\end{equation}
for every $G_m$ with $|G_m|=|B_{r_m}|$. As a result, for $r_m$ sufficiently small, we have 
\begin{equation}\label{e.monotone-stima}
\|w\|_{L^{\infty}}+|G_m|\lesssim\|\Gamma * \chi_{G_m}\|_{L^\infty}+r_m^n\lesssim
\begin{cases}
\frac{r_m^2}{2}\,\left(\frac{1}{2}-\log r_m\right) & \text{if } \;n=2,\\
\frac{r_m^{2}}{2\,(n-2)}& \text{if } \;n\geq3,
\end{cases}
\end{equation}
Here we have used the direct computations:
\begin{equation}\label{e.radial 2}
\Gamma*\chi_{B_r}(x)=
\begin{cases}
\frac{|x|^2}{4}+\frac{r^2}{2}\,\left(\log r -1\right) & \text{if }\;|x|\leq r,\\
\frac{r^2}{2}\,\left(\log |x|-\frac{1}{2}\right) & \text{if }\;|x|> r,
\end{cases}
\qquad \text{if }\;n=2,
\end{equation}
\begin{equation}\label{e.radial n}
\Gamma*\chi_{B_r}(x)=
\begin{cases}
\frac{|x|^2}{2\,n}+\frac{r^2}{2\,(2-n)} & \text{if }\;|x|\leq r,\\
\frac{r^n}{n\,(2-n)\,|x|^{n-2}} & \text{if }\;|x|> r,
\end{cases}
\qquad \text{if }\;n\geq 3.
\end{equation}
Therefore, as soon as $r_m$ is small enough that $\chi_{B_{r_m}(p)}\in\cC_m$ for some $p\in\Omega$, it follows by the previous two estimates, with $G_m=B_{r_m}(p)$, that
\begin{equation}\label{e.deficit_2}
\Per(E_m)-\Per(B_{r_m}(p)) \lesssim 
\begin{cases}
\gamma\,r_m^2\,|\log r_m|\, |E_m\sdif B_{r_m}(p)| &\text{if }\;n=2,\\
\gamma\,r_m^2\, |E_m\sdif B_{r_m}(p)| &\text{if }\;n\geq3.
\end{cases}
\end{equation}
In particular, by the quantitative isoperimetric inequality \eqref{e.iso2}, there exists an optimal isoperimetric ball $B_{E_m}^{opt}$ for $E_m$ such that
\begin{equation}\label{e.iso2bis}
|E_m\sdif B^{opt}_{E_m}|^2\lesssim r_m^{n+1}\,\big(\Per(E_m)-\Per(B^{opt}_{E_m})\big).
\end{equation}
In the case $\chi_{B^{opt}_{E_m}}\in\cC_m$, gathering together \eqref{e.iso2bis} and \eqref{e.deficit_2}, we have that
\begin{equation}\label{e.1st iso}
|E_m\sdif B^{opt}_{E_m}|\lesssim
\begin{cases}
\gamma\,r_m^{n+3}\,|\log r_m| &\text{if }\;n=2,\\
\gamma\,r_m^{n+3} &\text{if }\;n\geq3.
\end{cases}
\end{equation}

\subsection{Volume constraint}\label{ss.volume}
In order to deduce uniform regularity properties for minimizers, it is convenient to get rid of the volume constraint. This has been done in several contexts by the means of a penalization argument.
To this purpose, let us rescale our sets: set $p_m$ for the barycenter of $E_m$ and
\[
H_m:=(E_m-p_m)/r_m\subset \Omega_m:=(\Omega-p_m)/r_m.
\]
Note that $H_m$ is a minimizer of $F_{\gamma r_m^3,m}$ in $\cC_m(\Omega_m)$.  The following lemma shows that, if $H_m$ is sufficiently close to a given $H\subset\R^n$ well-contained in $\Omega_m$, the volume constraint can be dropped.

\begin{lemma}\label{l.vol constr}
Let $H_m\subset\Omega_m$ be as above for $0<m\leq m_0$ with $m_0$ a given constant. Let $H\subset\Omega_m\subset\R^n$ be a set of finite perimeter such that $\dist(H,\de\Omega_m)\geq 1$ for every $m\in(0,m_0)$.
Then, there exists $\Lambda>0$ with this property: for every $m\in(0,m_0)$, if $|H\sdif H_m|\leq \Lambda^{-1}$, then $H_m$ is a minimizer of $G_{\Lambda,m}$,
\[
G_{\Lambda,m}(E):= F_{\gamma r_m^3,m}(E)+\Lambda\,||E|-\omega_n|,
\]
in the class of all sets $E$ with $|E\sdif H|\leq2\,\Lambda^{-1}$.
\end{lemma}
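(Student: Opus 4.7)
The proof is by contradiction. Suppose there is $E\subset \Omega_m$ with $|E\sdif H|\leq 2\Lambda^{-1}$ and $G_{\Lambda,m}(E) < G_{\Lambda,m}(H_m)$. Since $|H_m|=\omega_n$, this reads $F_{\gamma r_m^3,m}(E) + \Lambda\bigl||E|-\omega_n\bigr| < F_{\gamma r_m^3,m}(H_m)$. The case $|E|=\omega_n$ is immediate: then $\chi_E\in\cC_m(\Omega_m)$ and the strict inequality violates the minimality of $H_m$ in $\cC_m(\Omega_m)$, so I may assume $|E|\ne\omega_n$. The goal is to produce a volume-corrected competitor $\tilde E\in\cC_m(\Omega_m)$ satisfying
\[
F_{\gamma r_m^3,m}(\tilde E) \leq F_{\gamma r_m^3,m}(E) + C_0\bigl||E|-\omega_n\bigr|,
\]
for some $C_0$ depending only on $H,\Omega,\gamma,m_0$. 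Granted this, the minimality of $H_m$ applied to $\tilde E$ and the definition of $G_{\Lambda,m}$ give $G_{\Lambda,m}(E) - G_{\Lambda,m}(H_m) \geq (\Lambda-C_0)\bigl||E|-\omega_n\bigr|$, which contradicts the hypothesis as soon as $\Lambda > C_0$; this determines $\Lambda$.

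To construct $\tilde E$, I would push $E$ along the flow of a fixed smooth vector field. Since $\dist(H,\de\Omega_m)\geq 1$ for every $m\in(0,m_0)$, the open $1/2$-neighborhood $U$ of $H$ is contained in $\Omega_m$ uniformly in $m$. Fix once and for all $X\in C_c^\infty(U;\R^n)$ with $\int_H \textup{div}(X)\,dx = 1$; existence follows from the divergence theorem, e.g.\ by taking $X=(\cdot-x_0)\psi$ with $\psi\in C_c^\infty(U)$ equal to $1$ on $H$, then rescaling. Let $\Phi_t := \Id + tX$; for $|t|$ small, $\Phi_t$ is a diffeomorphism of $\R^n$ equal to the identity outside $U$, so $\Phi_t(E)\subset\Omega_m$. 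For $\Lambda$ large, $|E\sdif H|\leq 2\Lambda^{-1}$ and the boundedness of $\textup{div}(X)$ yield $\int_E \textup{div}(X) \geq 1/2$, hence the map $t\mapsto|\Phi_t(E)| = |E| + t\int_E \textup{div}(X) + O(t^2)$ is strictly monotone near $t=0$ with derivative bounded below; the intermediate value theorem produces $t^*$ with $|t^*|\lesssim \bigl||E|-\omega_n\bigr|$ such that $\tilde E:=\Phi_{t^*}(E)$ satisfies $|\tilde E|=\omega_n$. The area formula together with $|J^\tau\Phi_t| = 1 + t\,\textup{div}_\tau(X) + O(t^2)$ then gives $|\Per(\tilde E) - \Per(E)| \lesssim |t^*|\,\Per(E,U)$ and $|\tilde E\sdif E| \lesssim |t^*|\,\Per(E,U)$, while a direct adaptation of Proposition~\ref{p.quasi min} to the rescaled setting yields $|\NL(\tilde E) - \NL(E)| \lesssim |\tilde E\sdif E|$. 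Combined with the uniform a priori bound $\Per(E) \leq G_{\Lambda,m}(E) \leq G_{\Lambda,m}(H_m) \lesssim 1$ (obtained by comparing $H_m$ with a unit ball in $\Omega_m$), this closes the Lipschitz estimate.

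The main obstacle is ensuring that $C_0$ is uniform in $m\in(0,m_0)$, so that a single $\Lambda$ works across the whole range. This relies on three points: $X$ can be chosen independently of $m$, because $U\subset\Omega_m$ holds uniformly; the Lipschitz constant of $\NL$ on $\Omega_m$ is uniformly bounded, since the singular part of the Green function of $\Omega_m$ is the fundamental solution $\Gamma$ whose convolution estimate is scale-invariant, combined with the control $|E|\approx\omega_n$; and the bound $F_{\gamma r_m^3,m}(H_m)\lesssim 1$ is uniform, thanks to $\gamma r_m^3\leq\gamma r_{m_0}^3$ and comparison with a ball in $\Omega_m$.
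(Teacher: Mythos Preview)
Your argument is correct and follows essentially the same strategy as the paper: contradict the conclusion, volume-correct the offending competitor by a local deformation, and close via the Lipschitz continuity of $\NL$ together with a uniform perimeter bound. The only difference is packaging---the paper runs the contradiction along a sequence $\Lambda_h\to\infty$ and cites the deformation construction from \cite{EsFu,AcFuMo}, whereas you build the deformation explicitly via the flow of a fixed vector field supported in a neighborhood of $H$ and identify the threshold $C_0$ directly; the two constructions yield the same estimates (your $t^*$ plays the role of the paper's $\sigma_h$).
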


The proof of the lemma follows from a simple adaptation of the computations in \cite[Section~2]{EsFu} (see also \cite[Proposition~2.7]{AcFuMo}).
We give here only the necessary modifications.

\begin{proof}
The proof is by contradiction. Assume that there exist $\Lambda_h\to+\infty$ with this property: there exist $m_h\in(0,m_0)$, $H_h$ minimizers of $F_h:=F_{\gamma r_{m_h},m_h}$ and $E_h$ minimizers of $G_h:=G_{\Lambda_h,m_h}$ such that:
\begin{itemize}
\item[(a)] $|H_h\sdif H|\leq \Lambda_h^{-1}$;
\item[(b)] $|E_h\sdif H|\leq 2\,\Lambda_h^{-1}$;
\item[(c)] $|E_h|<|H_h|=\omega_n$ (the case $|E_h|>|H_h|$ is analogous);
\item[(d)] $G_{h}(E_h)<F_{h}(H_h)$.
% \begin{equation}\label{e.absurd}
% G_{\Lambda_h}(E_h)<
% % \Per(E_h)+\Lambda_h\,||E_h|-1|<
% F_{h}(H_h).
% \end{equation}
\end{itemize}
Since $E_h\to H$ in $L^{1}(\R^n)$, as in \cite[Proposition~2.7]{AcFuMo}, one can show the existence of suitable deformations $\tilde E_h$ satisfying the following:
\begin{itemize}
\item[(e)] $|\tilde E_h|=|H_h|=\omega_n$;
\item[(f)] $\dist(\tilde E_h,H)<1$ (in particular, $\tilde E_h\subset\Omega_m$);
\item[(g)] there exist $\sigma_h>0$ with $|H_h|-|E_h|\geq c_1(n)\,\sigma_h$ such that
\begin{gather*}
\Per(\tilde E_h)\leq \Per(E_h)\,(1+c_2(n)\,\sigma_h)
\quad\text{and}\quad
|\tilde E_h\sdif E_h|\leq c_3(n)\,\sigma_h\,\Per(E_h),
\end{gather*}
where $c_1, c_2, c_3>0$ are dimensional constants.
\end{itemize}
Hence, we infer that, for $h$ sufficiently large,
\begin{align*}
F_{h}(\tilde E_h)&=G_{h}(\tilde E_h)\\
&\leq G_{h}(E_h)+\big[c_2(n)\,\sigma_h\,\Per(E_h)+C\,|E_h\sdif \tilde E_h|-\Lambda_h\,||E_h|-\omega_n|\big]\\
&\stackrel{\mathclap{(d),(g)}}{<}\;
F_{h}(H_h)+\sigma_h\,\big[c_2\,\Per(E_h)+C\,c_3\,\Per(E_h)-c_1\,\Lambda_h\big]\\
&<F_{h}(H_h),
\end{align*}
where we used the Lipschitz continuity of the nonlocal term, $\Lambda_h\to+\infty$ and the uniform bound on $\Per(E_h)$ implied by:
\[
\Per(E_h)\leq F_{h}(H_h)\leq F_{\gamma r_{m_h}^3,m_h}(B_1)
%\Per(B_1)+\gamma\, r_{m_0}^3\,\NL(B_1)
<+\infty \quad\forall\;h\in\N.
\]
This contradicts the minimizing property of $H_h$ and, hence, proves the lemma.
\end{proof}

\subsection{$\Lambda$-minimizers}
It follows from Lemma~\ref{l.vol constr} that the sets $H_m$ are uniform strong $\Lambda$-minimizer of the perimeter according to the following definition.

\begin{definition}\label{d.lambda min}
Let $\Omega\subset\R^n$ be open. A set of finite perimeter $E\subset\Omega$ is a \textit{strong $\Lambda$-minimizer} of the perimeter in $\Omega$ at scale $\eta>0$ if
\begin{equation}\label{e.L min}
P(E) \leq P(F) + \Lambda |E\sdif F|,\quad\forall\; E\sdif F\subset\subset \Omega,\;|E\sdif F|\leq \eta.
\end{equation}
\end{definition}

This notion of almost minimality has been widely studied in the regularity theory for minimal surfaces.
% We say that the sets $E_k\subset\Omega_k$ are \textit{uniformly} strong $\Lambda$-minimizers if \eqref{e.L min} holds for every $k$ with the same $\Lambda,\eta$.
By the theory developed in \cite{Alm, Bom, GMT, Tam} (in particular, see \cite[Theorem~1.9 and Proposition~3.4]{Tam}), strong $\Lambda$-minimizers have regularity estimates which are uniform in the parameters $\Lambda$ and $\eta$. More precisely, for every $\alpha\in(0,1)$, there exists a constant $\eps_0=\eps_0(n,\alpha, \Lambda, \eta)$ such that
\begin{equation*}%\label{e.exc}
\text{Exc}(E, B_{r}(x)) :=  r^{1-n}\left(\Per(E)-|D\chi_E (B_r(x))|\right)\leq \eps_0 \quad \Longrightarrow \quad \de E\cap B_{\frac{r}{2}}(x)\in C^{1,\alpha}.
\end{equation*}
Since the quantity Exc is continuous under $L^1$ convergence of $\Lambda$-minimizers, uniform regularity estimates can be inferred for $\Lambda$-minimizers in a neighborhood of a given smooth set.
% 
% In the following proposition we collect the results we will need in the sequel of the paper (see \cite{Alm, Bom, GMT} and  \cite[Theorem~1.9 and Proposition~3.4]{Tam}) -- according to the discussion in \S~\ref{ss.volume}, we need here to consider the case of varying domains $\Omega_k$, but the proofs remain unchanged.

\begin{proposition}\label{p.regularity}
Let $E\subset\Omega$ be a strong $\Lambda$-minimizers at scale $\eta$ and let $F\subset\R^n$ be a set with smooth boundary and $\dist(F,\de\Omega)\geq 1$. Then, for every $\alpha\in(0,1)$, there exist constants $\eta_0=\eta_0(n,\alpha, \Lambda,\eta) >0$, $R=R(n, \Lambda,\eta)>0$, $c=c(n)>0$  and a modulus of continuity $\omega:\R^+\to\R^+$ with this property:
\begin{itemize}
\item[(i)] if $|E\sdif F|\leq\eta_0$, then $\de E$ can be parametrized on $\de F$ by a function $\ph:\de F\to \R$,
\[
\de E=\big\{x+\ph(x)\,\nu_F(x)\,:\,x\in\de F\big\},
\]
with $\|\ph\|_{C^{1,\alpha}}\leq\omega(|E\sdif F|)$;
\item[(ii)] for all $x\in E$ and $0<r<R$ with $B_r(x)\subset\Omega$, it holds
\begin{equation}\label{e.density}
c(n)\,r^n\leq |E\cap B_r(x)|.
\end{equation}
\end{itemize}
\end{proposition}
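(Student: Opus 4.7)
Both claims will follow from the Almgren--Bombieri--Tamanini theorem quoted just above the statement: smallness of the spherical excess $\mathrm{Exc}(E,B_r(x))$ forces $\de E$ to be $C^{1,\alpha}$ in $B_{r/2}(x)$ with uniform estimates depending only on $n,\alpha,\Lambda,\eta$. I would first prove the density bound (ii), which is self-contained, and then use it as an input to derive (i) via a contradiction/compactness argument that converts $L^1$-closeness $|E\sdif F|\leq\eta_0$ into uniform smallness of the excess along $\de E$.

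\textbf{Density bound.} Fix $x\in E$ and $r\leq R$ with $B_r(x)\subset\Omega$ and $|B_r|\leq\eta$, and test the minimality inequality \eqref{e.L min} against the competitor $E\setminus B_r(x)$. This yields
\[
\Per(E;B_r(x)) \leq \cH^{n-1}\big(E\cap \de B_r(x)\big) + \Lambda\,|E\cap B_r(x)|.
\]
Combining this estimate with the relative isoperimetric inequality applied to $E\cap B_r(x)$ and the coarea formula, one obtains a differential inequality of the form $V(r)^{(n-1)/n}\lesssim V'(r)+\Lambda V(r)$ for $V(r):=|E\cap B_r(x)|$, which integrates to $V(r)\gtrsim r^n$ as soon as $\Lambda r\lesssim 1$. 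A symmetric argument with the competitor $E\cup B_r(x)$ gives the companion upper density bound $|E\cap B_r(x)|\leq (1-c)|B_r|$ at points of $\de E$, which will be needed later.

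\textbf{Compactness for (i).} Assume the conclusion fails and choose a sequence $E_k$ of strong $\Lambda$-minimizers at scale $\eta$ with $|E_k\sdif F|\to 0$ for which no $C^{1,\alpha}$ graph representation over $\de F$ with vanishing norm exists. Testing $\Lambda$-minimality against $F$ (whose symmetric difference with $E_k$ is compactly contained in $\Omega$ for $k$ large, thanks to (ii)) gives $\Per(E_k)\leq \Per(F)+\Lambda|E_k\sdif F|\to \Per(F)$; combined with $L^1$-convergence and lower semicontinuity this yields weak-$*$ convergence $|D\chi_{E_k}|\rightharpoonup |D\chi_F|$. The density bound (ii) then forces Hausdorff convergence $\de E_k\to \de F$, since a sequence $y_k\in\de E_k$ staying at positive distance from $\de F$ would trap a uniform volume of $E_k$ or of its complement away from $F$, contradicting $L^1$-convergence. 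Hence for every $y\in\de E_k$ near $\de F$ the ball $B_r(y)$ is contained in a slightly larger ball centered on $\de F$, whose excess under $F$ vanishes as $r\to 0$ by smoothness of $\de F$; by the weak-$*$ convergence this transfers to $\mathrm{Exc}(E_k,B_r(y))<\eps_0(n,\alpha',\Lambda,\eta)$ uniformly in $y$, for some fixed $\alpha'>\alpha$, $k$ large and $r$ small but fixed. Applying Almgren--Bombieri--Tamanini at every such $y$ yields a uniform $C^{1,\alpha'}$ parametrization of $\de E_k$ in a tubular neighborhood of $\de F$, which via the Hausdorff closeness and the implicit function theorem can be rewritten as a normal graph $\ph_k$ on $\de F$ with uniformly bounded $C^{1,\alpha'}$-norm and $\|\ph_k\|_{C^0}\to 0$. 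Interpolation between $C^0$ and $C^{1,\alpha'}$ then gives $\|\ph_k\|_{C^{1,\alpha}}\to 0$, a contradiction, and simultaneously furnishes the modulus of continuity $\omega$.

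\textbf{Main obstacle.} The technical heart of the argument is the passage from global $L^1$-smallness of $E\sdif F$ to pointwise smallness of the excess along $\de E$. Three ingredients must work together: weak-$*$ convergence of the perimeter measures (from $\Lambda$-minimality plus semicontinuity), the density lower bound in (ii) (to pin $\de E$ to a Hausdorff neighborhood of $\de F$), and smoothness of $\de F$ (so that $F$ itself has vanishing excess at small scales). The resulting modulus $\omega$ is intrinsically non-explicit, being produced by this compactness scheme rather than by a quantitative estimate.
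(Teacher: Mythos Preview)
The paper does not give a self-contained proof of this proposition; it is stated as a direct consequence of the regularity theory in \cite{Alm, Bom, GMT, Tam} (specifically \cite[Theorem~1.9 and Proposition~3.4]{Tam}), together with the one-line remark that the excess $\mathrm{Exc}$ is continuous under $L^1$ convergence of $\Lambda$-minimizers. Your proposal is a correct and standard fleshing-out of precisely this sketch: the density bound via the differential inequality for $V(r)=|E\cap B_r(x)|$, and the compactness/contradiction scheme that turns $L^1$-closeness into uniform smallness of excess and hence into a $C^{1,\alpha'}$ graph via the $\eps$-regularity theorem, are exactly the mechanisms behind the cited references. So the approaches agree.

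One small caution: in your compactness step you test $\Lambda$-minimality of $E_k$ against $F$ globally, invoking (ii) to claim $E_k\sdif F\subset\subset\Omega$. The density bound (ii) only applies at points $x$ with $B_r(x)\subset\Omega$, so it does not immediately rule out pieces of $E_k$ clinging to $\de\Omega$. In the paper's applications this is a non-issue (the sets $H_m$ come from Lemma~\ref{l.vol constr} and are already confined near $B_1$), and in general you can avoid the global comparison entirely by testing only with local competitors of the form $(E_k\setminus B_r(y))\cup(F\cap B_r(y))$ for $y\in\de F$, which suffices to obtain the local convergence of perimeter measures you need.
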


Note that, in the sequel, we will apply Proposition~\ref{p.regularity} always in the case $F=B_1$.

\subsection{Higher regularity}\label{ss.improved regularity}
Thanks to Proposition~\ref{p.regularity}, the first variations in \S~\ref{ss.first variations} can be used to improve the regularity of the minimizers of $F_{\gamma,m}$.

\begin{proposition}\label{p.improved}
Let $E_m$ be a minimizer of $F_{\gamma, m}$ and let $H_m$, $p_m$ and $\Omega_m$ be as in \S~\ref{ss.volume}. Then, for every $\alpha \in (0,1)$, there exists $\eta>0$ such that, if $\gamma r_m^3\lesssim 1$, $|H_m\sdif B_1|\leq \eta$ and $\dist(B_1,\de\Omega_m)\geq 1$, then $H_m$ can be parametrized on $\de B_1$,
\[
\de H_m=\big\{(1+\ph_m(x))\,x\,:\,x\in\de B_1\big\}, 
\]
and $\|\ph_m\|_{C^{3,\alpha}}\leq \bar\omega(|H_m\sdif B_1|)$ for a given modulus of continuity $\bar \omega$.
\end{proposition}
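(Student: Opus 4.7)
The plan is to start from the $C^{1,\alpha}$ parametrization provided by Proposition~\ref{p.regularity} and bootstrap it to $C^{3,\alpha}$ using the rescaled Euler--Lagrange equation from \S~\ref{ss.first variations} together with standard Schauder theory for the quasilinear mean curvature operator. First, I would fix $\alpha'\in(\alpha,1)$ and apply Proposition~\ref{p.regularity} with $F=B_1$ to the set $H_m$, which is a uniform strong $\Lambda$-minimizer of the perimeter by Lemma~\ref{l.vol constr}, under the hypothesis $\gamma r_m^3\lesssim 1$. Provided $|H_m\sdif B_1|\le\eta$ for $\eta$ small enough, this yields the parametrization $\de H_m=\{(1+\ph_m(x))x:x\in\s^{n-1}\}$ with $\|\ph_m\|_{C^{1,\alpha'}}\le\omega(|H_m\sdif B_1|)$.

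Next I would rescale the Euler--Lagrange equation \eqref{e.first var}--\eqref{e.potential} by setting $\tilde v(y):=r_m^{-2}\,v(p_m+r_m y)$. A direct computation yields
\[
-\Delta\tilde v=\chi_{H_m}-m \quad \text{in }\Om_m,\qquad H_{\de H_m}+4\,\gamma\,r_m^3\,\tilde v=\tilde c \quad \text{on }\de H_m,
\]
for some Lagrange multiplier $\tilde c\in\R$. Only $\nabla\tilde v$ genuinely enters the problem (an additive constant being absorbed into $\tilde c$), hence we may normalize $\fint_{B_2}\tilde v=0$; interior Calder\'on--Zygmund and Schauder estimates then give $\|\tilde v\|_{C^{1,\alpha'}(B_{3/2})}\le C(n)$ uniformly in $m$, despite the fact that $\Om_m$ blows up as $m\to 0$. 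Writing the mean curvature $H_{\de H_m}$ as a quasilinear second-order elliptic operator in $\ph_m$ over $\s^{n-1}$, with smooth coefficients in $(x,\ph_m,\nabla\ph_m)$, and viewing $\tilde c-4\gamma r_m^3\,\tilde v$ pulled back to $\s^{n-1}$ via the $C^{1,\alpha'}$ parametrization as a $C^{1,\alpha'}$ right-hand side with uniform bounds, Schauder theory on $\s^{n-1}$ upgrades $\ph_m$ to $C^{3,\alpha'}$ with a uniform bound $\|\ph_m\|_{C^{3,\alpha'}}\le C$.

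To deduce the modulus of continuity in $C^{3,\alpha}$, I would argue by compactness: if no $\bar\omega$ as required existed, there would be a sequence $m_k$ with $|H_{m_k}\sdif B_1|\to 0$ and $\|\ph_{m_k}\|_{C^{3,\alpha}}\ge\eps_0>0$; by the uniform $C^{3,\alpha'}$ bound and the compact embedding $C^{3,\alpha'}\hookrightarrow C^{3,\alpha}$, a subsequence converges in $C^{3,\alpha}$, and by the first step its limit must vanish identically since $\|\ph_{m_k}\|_{C^{1,\alpha'}}\to 0$, a contradiction. The delicate point I anticipate is the uniform interior control of $\tilde v$ as $m\to 0$, in spite of the blow-up of $\Om_m$; this is handled by exploiting the freedom in the additive constant of $\tilde v$ and relying solely on interior elliptic estimates.
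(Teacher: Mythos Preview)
Your overall strategy---start from the $C^{1,\alpha'}$ parametrization of Proposition~\ref{p.regularity}, bootstrap through the rescaled Euler--Lagrange equation, and deduce the modulus of continuity by a compactness argument with $\alpha'>\alpha$---matches the paper's proof. Two points, however, are not justified.

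First, your control of $\tilde v$ is flawed. Subtracting a constant so that $\fint_{B_2}\tilde v=0$ does \emph{not} yield a uniform interior bound from $-\Delta\tilde v=\chi_{H_m}-m\in L^\infty(B_2)$ alone: the harmonic functions $u_k(x)=k\,x_1$ satisfy $-\Delta u_k=0$, $\fint_{B_2}u_k=0$, yet $\|u_k\|_{C^{1,\alpha'}(B_{3/2})}\to\infty$. Interior Calder\'on--Zygmund estimates require an a~priori bound on $\|\tilde v\|_{L^1(B_2)}$ (or $L^\infty$), which the mean-zero normalization does not provide. The paper obtains this bound globally, using $|G|\lesssim|\Gamma|+1$ together with the radial rearrangement \eqref{e.monotone} to get $\|w_m\|_{L^\infty}\lesssim 1$ (with a $|\log r_m|$ factor absorbed by $\gamma r_m^3$ in dimension two); only then do local $W^{2,p}$ estimates give the required $C^{1,\alpha'}$ control on the potential near $\partial H_m$.

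Second, you do not bound the Lagrange multiplier $\tilde c$. Absorbing the additive constant of $\tilde v$ into $\tilde c$ merely shifts $\tilde c$; it does not show that the resulting right-hand side $\tilde c-4\gamma r_m^3\tilde v$ is uniformly bounded. The paper handles this by invoking Lemma~\ref{l.vol constr}: since $H_m$ locally minimizes the penalized functional $G_{\Theta,m}$, comparing variations that increase and decrease the volume yields the two-sided inequality $|\lambda_m|\le\Theta$ (see \eqref{e.multiplier1}--\eqref{e.multiplier}). An alternative route is to integrate the Euler--Lagrange equation against the radial field and use $\int_{\partial H_m}H_{\partial H_m}(x\cdot\nu)\,d\mathcal H^{n-1}=(n-1)\Per(H_m)$ together with $\int_{\partial H_m}x\cdot\nu\,d\mathcal H^{n-1}=n|H_m|$ to solve for $\tilde c$; either way, an explicit argument is needed.
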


\begin{proof}
The existence of a parametrization $\ph_m$ is guaranteed by Proposition~\ref{p.regularity} (i), under the hypothesis that $\eta$ is chosen sufficiently small.
We need only to show that the Euler--Lagrange equation for $F_{\gamma, m}$, namely
\begin{equation}\label{e.first H}
H_{\de H_m}(x+\ph_m(x)\,x)+4\,\gamma\,r_m^3\,w_m(x+\ph_m(x)\,x)=\lambda_m,
\end{equation}
with $\lambda_m\in\R$ a Lagrange multiplier, allows actually to infer the higher regularity claimed in the statement.
Note that it suffices to prove $\sup_{m,\gamma}\|\ph_m\|_{C^{3,\alpha}}\leq C$ (note that the parameter $\gamma$ is hidden in the notation $\ph_m$). Indeed, since $\|\ph_m\|_{C^{1,\alpha}}\leq\omega(|H_m\sdif B_1|)\to 0$ as $\eta\to0$, where $\omega$ is the modulus of continuity in Proposition~\ref{p.regularity}, by compactness in the $C^{3,\alpha}$ norm we would as well deduce that $\|\ph_m\|_{C^{3,\alpha}}\to 0$.

To show this, we consider separately the two different terms in \eqref{e.first H}.
For what concerns $\lambda_m$ we recall that, by Lemma~\ref{l.vol constr} there exists $\Theta>0$ such that $H_m$ minimize $G_{\Theta,m}$ locally in a neighborhood of $B_1$. This allows us to compute the first variations of $G_{\Theta,m}$. Since the penalization term $\Theta\,||E|-\omega_n|$ is not differentiable, we have to distinguish between the variations increasing the volume and those decreasing it. Let $\psi\in C^\infty(\de B_1)$ and $K_\eps$ be the competitor
\[
\de K_\eps:=\big\{x+(\ph_m(x)+\eps\,\psi(x))\,x\,:\,x\in\de B_1\big\}.
\]
The volume of $K_\eps$ is given by
\[
|K_\eps|=\omega_n\int_{\de B_1}(1+\ph_m+\eps\,\psi)^n\,d\cH^{n-1},
\]
% Since from Proposition~\ref{p.regularity} (i) we can assume $\|\ph_m\|_{L^\infty}<1$ and
% \[
% |K_\eps|=\omega_n+\eps\,\omega_n\int_{\de B_1}n\,(1+\ph_m)^{n-1}\,\psi\,d\cH^{n-1}+o(\eps^2),
% \]
hence, it follows that $|K_\eps|> \omega_n$ or $|K_\eps|< \omega_n$ for small $\eps>0$ if $\int\psi>0$ or $\int\psi<0$, respectively.
The minimizing property of $H_m$ implies the following variational inequality to hold true:
\begin{equation*}%\label{e.ineq}
\frac{d G_{\Theta, m}(K_\eps)}{d\eps}\vert_{\eps=0^+}\geq0.
\end{equation*}
In turns this leads to (with analogous computations for the first variations of $F_{\gamma, m}$ as in \cite{ChSt})
\begin{align}
\int_{\de B_1}\Big(H_{\de H_m}(x+\ph_m(x)\,x)+4\,\gamma\,r_m^3\,w_m(x+\ph_m(x)\,x)+\Theta\Big)\,\psi(x) \geq 0\quad \text{if }\;\int_{\de B_1}\psi\,d\cH^{n-1}>0,\label{e.multiplier1}\\
\int_{\de B_1}\Big(H_{\de H_m}(x+\ph_m(x)\,x)+4\,\gamma\,r_m^3\,w_m(x+\ph_m(x)\,x)-\Theta\Big)\,\psi(x) \geq 0\quad \text{if }\;\int_{\de B_1}\psi\,d\cH^{n-1}<0,\label{e.multiplier2}
\end{align}
where $w_m$ solves the boundary value problem:
\begin{equation}\label{e.w_m}
\begin{cases}
-\Delta w_m = \chi_{H_m}-m & \text{in }\;\Omega_m,\\
\nabla w_m\cdot \nu=0 & \text{on }\;\de\Omega_m,\\
\int_{\Omega_m}w_m=0. &
\end{cases}
\end{equation}
Hence, from \eqref{e.multiplier1} and \eqref{e.multiplier2} we deduce a uniform bound on the Lagrange multipliers $\lambda_m$:
\begin{equation}\label{e.multiplier}
|\lambda_m|\leq \Theta\quad\forall m>0.
\end{equation}
For what concerns $w_m$, by an analogous computation as in \eqref{e.monotone} using $|G|\lesssim |\Gamma|+1$ and the radial monotonicity of $\Gamma$, we deduce that $\|w_m\|_{L^\infty}\leq C$. Moreover, since $\|\chi_{H_m}-\chi_{B_1}\|_{L^p}\lesssim\eta$ for every $p>n$, the Sobolev embendings and the Gagliardo--Niremberg interpolation inequality leads to uniform $W^{2,p}$ bounds and, hence, $C^{1,\alpha}$ bounds on $w_m$ for every $\alpha\in(0,1)$ (see \cite[Chapter 9]{Bre}). 

Hence, since $\ph_m$ has also uniform $C^{1,\alpha}$ bounds, the non-parametric theory for the mean curvature-type equation \eqref{e.first H} (see \cite[Chapter 3]{LU} or \cite[Appendix C]{Gi}) finally yelds the desired uniform $C^{3,\alpha}$ estimates for $\ph_m$. 
\end{proof}

%%%%%%%%%%%%%%%%%%%%%%% Torus %%%%%%%%%%%%%%%%%%%%%%%%%%%%%%%%%%%%%%%%
%%%%%%%%%%%%%%%%%%%%%%%%%%%%%%%%%%%%%%%%%%%%%%%%%%%%%%%%%%%%%%%%%%%%%%
%%%%%%%%%%%%%%%%%%%%%%%%%%%%%%%%%%%%%%%%%%%%%%%%%%%%%%%%%%%%%%%%%%%%%%
%%%%%%%%%%%%%%%%%%%%%%%%%%%%%%%%%%%%%%%%%%%%%%%%%%%%%%%%%%%%%%%%%%%%%%
%%%%%%%%%%%%%%%%%%%%%%%%%%%%%%%%%%%%%%%%%%%%%%%%%%%%%%%%%%%%%%%%%%%%%%
%%%%%%%%%%%%%%%%%%%%%%%%%%%%%%%%%%%%%%%%%%%%%%%%%%%%%%%%%%%%%%%%%%%%%%
\section{Periodic boundary conditions: $\Omega=\T^n$}\label{s.torus}

In this section, in order to show the proof of our main result in a technically simpler case, we prove a statement analogous to Theorem~\ref{t.main-0} for periodic boundary conditions. Indeed, since in the present case one discards the interactions with the boundary and the optimal centering of the asymptotic droplet, the proof is a direct consequence of the regularity arguments developed in the previous section.

\subsection{Notation and statement}
Let $\T^n$ be the $n$-dimensional torus obtained as the quotient of $\R^n$ via the $\mathbb{Z}^n$ lattice or, equivalently, $\T^n:=[0,1]^n$ with the identification of opposite faces.
We consider functions
\[
u\in BV(\T^n;\{0,1\})\quad\text{with}\quad \fint_{\T^n}u=m.
\]
As usual such functions $u$ can be identified with measurable sets $E\subseteq\R^n$ invariant under the action of $\mathbb{Z}^n$ and such that $|E\cap[0,1]^n|=m$.
The confining term of the energy is then given by the perimeter of $E$ in the torus:
\[
\Per(E,\T^n):=\int_{[0,1)^n}|D\chi_E|;
\]
and the nonlocal term by:
\[
\NL(E):=\int_{[0,1]^n}\int_{[0,1]^n}G(x,y)\,\chi_E(x)\,\chi_E(y)\,dx\,dy,
\]
where $G$ is the Green function for the Laplacian in $\T^n$, i.e.
\begin{equation}\label{e.G per}
\begin{cases}
-\Delta G(x,\cdot)=\delta_x -1 & \text{in }\T^n,\\
\int_{\Omega}G(x,y)\,dy=0.
\end{cases}
\end{equation}
By the invariance of the torus, we can write with a sligth abuse of notation $G(x,y)=G(|x-y|)$. In the case of periodic boundary conditions, Theorem~\ref{t.main-0} reduces to a statement regarding the shape of the minimizers $E_m$ and the asymptotic behavior of the energy.

\begin{theorem}\label{t.periodic}
There exists $\delta_0>0$ such that the following holds. Assume $r_m<1$ and
\begin{equation*}
\gamma\,r_m^{3}|\log r_m|<\delta_0 \quad\mbox{ if }\quad n=2\quad\mbox{ and }\quad\gamma\,r_m^{3}<\delta_0\quad\mbox{ if }\quad n\geq 3.
\end{equation*}
Then, every $E_m\subset\T^n$ minimizer of $F_{\gamma,m}$ is, up to a translation, a convex set such that
% $\de E_m=\{p_m+(r_m+\ph_m(x))\,x:x\in\s^{n-1}\}$,
\begin{equation*}
\de E_m=\{(1+\psi_m(x))\,r_m\,x:x\in\s^{n-1}\},
\end{equation*}
for some $\psi_m:\s^{n-1}\to \R$ with
\begin{equation}\label{e.rate periodic}
\|\psi_m\|_{C^1}\lesssim\gamma\,r_m^{n+3},
\end{equation}
and its energy has the following asymptotic expansion:
\begin{equation}\label{e.energy periodic}
F_{\gamma,m}(\chi_{E_m})=
\begin{cases}
2\,\pi\,r_m+\frac{\pi\,\gamma}{2}\,r_m^4\,\log r_m+\gamma\left(-\frac{1}{8}+\pi^2\,h(0)\right)\,r_m^4+O(r_m^6),& n=2,\\
n\,\omega_n\,r_m^{n-1}+\frac{2\,\gamma\,\omega_n}{4-n^2}\,r_m^{n+2}+\gamma\;\omega_n^2\,r_m^{2n}\,h(0)+O(r_m^{2n+2}),& n\geq3.
\end{cases}
\end{equation}
\end{theorem}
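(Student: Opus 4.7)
The plan is to combine the Lipschitz estimate for $\NL$ (Proposition~\ref{p.quasi min}) and the sharp quantitative isoperimetric inequality (Proposition~\ref{p.qip}) with the regularity machinery of Section~\ref{s.reg}, all applied in the rescaled picture. Let $p_m\in\T^n$ be the center of an optimal isoperimetric ball for $E_m$ and set $H_m:=(E_m-p_m)/r_m$, which minimizes $F_{\gamma r_m^3,m}$ on the rescaled torus of side length $1/r_m$. The same chain of inequalities that produced \eqref{e.1st iso}, with the Neumann Green function replaced by the periodic one and the $L^\infty$-bound \eqref{e.monotone} carrying over verbatim, gives $|H_m\sdif B_1|\lesssim\gamma r_m^3$, where the logarithmic correction for $n=2$ is absorbed by the smallness hypothesis. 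Because the energy, the Green function and the Robin function on $\T^n$ are all translation-invariant, the delicate optimal-centering issue that drives Theorem~\ref{t.main-0} is trivial here: after the translation $p_m$ the entire analysis takes place in a fixed neighborhood of $B_1$.

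Next, Lemma~\ref{l.vol constr} shows that $H_m$ is a uniform strong $\Lambda$-minimizer of the perimeter near $B_1$, so Proposition~\ref{p.regularity} provides a normal-graph parametrization $\partial H_m=\{(1+\psi_m(\sigma))\sigma:\sigma\in\s^{n-1}\}$ with $\|\psi_m\|_{C^{1,\alpha}}\to 0$, and Proposition~\ref{p.improved} upgrades this to a uniform $C^{3,\alpha}$ bound. To reach the sharp rate I would plug the parametrization into the Euler--Lagrange equation \eqref{e.first var}. Linearizing the scalar mean curvature about $\partial B_1$ produces the Jacobi operator $L\psi=-\Delta_{\s^{n-1}}\psi-(n-1)\psi$, whose kernel consists of constants -- neutralized by the Lagrange multiplier together with the linearized volume constraint $\int_{\s^{n-1}}\psi_m\,d\sigma=O(\|\psi_m\|_{L^2}^2)$ -- and the first spherical harmonics, killed by our choice of $p_m$ as a centering of $H_m$ at leading order. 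The nonlocal forcing $4\gamma r_m^3 w_m$ restricted to $\partial H_m$ splits as a constant plus a single-layer-type contribution of size $\|\psi_m\|_{L^\infty}$ from $\Gamma*(\chi_{H_m}-\chi_{B_1})$; inverting $L$ on the orthogonal complement of its kernel, absorbing the quadratic remainders using the uniform $C^{3,\alpha}$ bound, and iterating the resulting fixed-point map should yield the claimed $\|\psi_m\|_{C^1}\lesssim\gamma r_m^{n+3}$.

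Convexity of $E_m$ then follows because the second fundamental form of $\partial E_m$ differs from its counterpart on $\partial B_{r_m}$ by a term of size $\|\psi_m\|_{C^2}$, which is small by interpolation between the uniform $C^{3,\alpha}$ bound and the $L^1$-smallness. For the energy expansion \eqref{e.energy periodic} I would compare $F_{\gamma,m}(E_m)$ with the explicit asymptotic \eqref{e.exp ball} for $F_{\gamma,m}(B_{r_m}(p_m))$: the Fuglede-type second-order expansion of the perimeter around $B_{r_m}$ together with Proposition~\ref{p.quasi min} bounds the discrepancy by terms of higher order than those appearing in \eqref{e.exp ball}, while on the torus the function $g_r$ of Lemma~\ref{l.balls} is independent of the center and equals $h(0)+O(r^2)$, accounting for the constant $h(0)$ in the third-order term. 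The main obstacle I expect lies precisely in the quantitative step from the crude $L^1$-rate $\gamma r_m^3$ (rescaled) to the sharp $C^1$-rate $\gamma r_m^{n+3}$: the gain of a factor $r_m^n$ requires a careful analysis of the single-layer-type term $\Gamma*(\chi_{H_m}-\chi_{B_1})$ acting on $\psi_m$, combined with the precise spectral bounds for $L$ on the modes surviving the translation and volume normalizations, and this is where the technical heart of the proof lies.
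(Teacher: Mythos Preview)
Your setup through the uniform $C^{3,\alpha}$ regularity matches the paper exactly. The sharp-rate step, however, is handled quite differently. The paper never linearizes the Euler--Lagrange equation or invokes the Jacobi operator; instead it returns to the energy and proves an \emph{improved} Lipschitz estimate for $\NL$ (Proposition~\ref{p.deficit}). Once $E_m\sdif B_{r_m}$ is known to lie in a thin annulus of width $\lesssim|E_m\sdif B_{r_m}|/r_m^{n-1}$ around $\de B_{r_m}$, the crude bound \eqref{e.monotone} on $w=\Gamma*\chi_{B_{r_m}}$ can be replaced by an oscillation bound, since $|\nabla w|\lesssim r_m$ near $\de B_{r_m}$; and in the regular-part integral the leading constant $R(0,0)$ drops out by the volume constraint $|E_m|=|B_{r_m}|$, leaving a remainder $O(r_m^{n+1})\,|E_m\sdif B_{r_m}|$. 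This yields
\[
\Per(E_m)-\Per(B_{r_m})\lesssim \gamma\,r_m^{2-n}\,|E_m\sdif B_{r_m}|^2+\gamma\,r_m^{n+1}\,|E_m\sdif B_{r_m}|,
\]
and a \emph{second} application of the quantitative isoperimetric inequality \eqref{e.iso2}, after absorbing the quadratic term via $\gamma r_m^3<\delta_0$, gives directly $|E_m\sdif B_{r_m}|\lesssim\gamma\,r_m^{2n+2}$. The $C^1$-rate then follows by interpolation with the $C^{3,\alpha}$ bound, and the energy expansion by the very same improved Lipschitz estimate. This is considerably shorter than your proposed route and uses nothing beyond what you already set up.

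Your Jacobi-operator approach can in principle be pushed through, but your decomposition of the forcing misses the term that actually determines the rate. On the rescaled torus the regular part $R_m$ of the Green function is translation-invariant but \emph{not} rotation-invariant, so $R_m*\chi_{H_m}$ is not constant on $\de B_1$: its oscillation there is of order $r_m^{n}$ (the first nonvanishing Taylor term of $R$ at scale $r_m$, the linear one being killed by evenness). It is this inhomogeneous contribution of size $\gamma r_m^3\cdot r_m^{n}$ that fixes the size of $\psi_m$; with only the pieces you list --- a constant plus $O(\gamma r_m^3\,\|\psi_m\|)$ from $\Gamma*(\chi_{H_m}-\chi_{B_1})$ --- the fixed-point equation would force $\psi_m\equiv0$, contradicting the remark after Proposition~\ref{p.no ball} that the exact ball is never critical on $\T^n$. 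So the ``gain of $r_m^n$'' you flag as the obstacle does not come from the single-layer term or from spectral bounds for $L$ (both are $O(\|\psi_m\|)$ effects) but from the anisotropy of the regular part; the paper's energy-level argument captures exactly this through the $r_m^{n+1}$ term in Proposition~\ref{p.deficit} without ever isolating it explicitly.
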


\subsection{Improved perimeter estimate}
Due to the translation invariance, for every minimizer $E_m$ we may assume that $B^{opt}_{E_m}=B_{r_m}$ is centered at the origin. Therefore, from \eqref{e.1st iso} we infer for $H_m=E_m/r_m$ that
\[
|H_m\sdif B_1|\lesssim 
\begin{cases}
\gamma\,r_m^3\,|\log r_m|<\delta_0&\text{if }\;n=2,\\
\gamma\,r_m^3<\delta_0&\text{if }\;n\geq3.
\end{cases}
\]
If $\delta_0$ is chosen sufficiently small, by Lemma~\ref{l.vol constr}, the sets $H_m$ are minimizer of some penalized functional $G_{\Lambda,m}$ and, hence, are $\Lambda$-minimizers of the perimeter. By Proposition~\ref{p.regularity}, $H_m$ can be parametrized by the graph of a function $\ph_m$ on $\de B_{1}$ satisfying
\begin{equation*}%\label{e.psi}
\|\ph_m\|_{L^\infty(\de B_{1})}\lesssim |H_m\sdif B_{1}|,
\end{equation*}
thus implying that $E_m$ can be parametrized on $\de B_{r_m}$ by $\psi_m$ with
\begin{equation}\label{e.psi}
\|\psi_m\|_{L^\infty(\de B_{r_m})}\lesssim \frac{|E_m\sdif B_{r_m}|}{r_m^{n-1}}.
\end{equation}
These observations lead to the following proposition which is a consequence of an improved estimate for the Lipschitz constant of the nonlocal part of the energy.

\begin{proposition}\label{p.deficit}
There exists $\delta_0>0$ such that, if $\gamma\,r_m^3\,|\log r_m|\leq\delta_0$ in the case $n=2$ and if $\gamma\,r_m^3\leq\delta_0$ in the case $n\geq3$, and $E_m$ is a minimizer of $F_{\gamma,m}$, then
\begin{equation}\label{e.deficit periodic}
\Per({E_m})-\Per(B^{opt}_{E_m})\lesssim \gamma\,\frac{|{E_m}\sdif B^{opt}_{E_m}|^2}{ r_m^{n-2}\,}+\gamma\,r_m^{n+1}\,|{E_m}\sdif B^{opt}_{E_m}|.
\end{equation}
\end{proposition}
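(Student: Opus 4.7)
The main idea is to strengthen the Lipschitz-type estimate of Proposition~\ref{p.quasi min} by testing the nonlocal energy against the optimal isoperimetric ball $B^{opt}_{E_m}$ itself, and exploiting two facts not used in that general estimate: the volume cancellation $|B^{opt}_{E_m}|=|E_m|$, which allows subtracting \emph{any} constant from the potential inside the $L^1$ pairing; and the quantitative parametrization of $\de E_m$ over $\de B^{opt}_{E_m}$ furnished by \eqref{e.psi}.

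First I would use the periodicity to translate so that $B^{opt}_{E_m}=B_{r_m}(0)$. The preliminary bound \eqref{e.1st iso} together with the smallness of $\delta_0$ makes $|E_m\sdif B_{r_m}|$ small enough that Lemma~\ref{l.vol constr} and Proposition~\ref{p.regularity} apply, so that \eqref{e.psi} gives $\|\psi_m\|_{L^\infty}\lesssim |E_m\sdif B_{r_m}|/r_m^{n-1}=:d$. From the proof of Proposition~\ref{p.quasi min} --- namely keeping the nonnegative Dirichlet term discarded in \eqref{e.NL diff estimate} --- one obtains
\[
\NL(B_{r_m})-\NL(E_m)\leq 2\int_{\T^n}\big(w(y)-c\big)\big(\chi_{B_{r_m}}-\chi_{E_m}\big)(y)\,dy,
\]
where $w(y):=\int_{\T^n}G(x,y)\,\chi_{B_{r_m}}(x)\,dx$ and $c\in\R$ is arbitrary, by $|B_{r_m}|=|E_m|$.

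Next, by the translation symmetry one writes $G(x,y)=g(x-y)$ with $g$ periodic, smooth away from $0$, and with a $\Gamma$-type singularity at the origin. Fixing a smooth cutoff $\eta$ equal to $1$ near $0$ and supported in a fundamental cell yields the decomposition $g=\eta\,\Gamma+\tilde R$ with $\tilde R\in C^\infty(\T^n)$, hence $w=w_\Gamma+w_{\tilde R}$ with $w_\Gamma=\Gamma\ast\chi_{B_{r_m}}$ (the cutoff is identically $1$ on the relevant set when $r_m$ is small) and $w_{\tilde R}=\tilde R\ast\chi_{B_{r_m}}$. The Newtonian part $w_\Gamma$ is radial around $0$, and by the explicit formulas \eqref{e.radial 2}--\eqref{e.radial n} its radial derivative satisfies $|\de_r w_\Gamma|\lesssim r_m$ in a neighborhood of $\{|y|=r_m\}$; in particular $w_\Gamma$ is constant (call this value $c_\Gamma$) on $\de B_{r_m}$ and $|w_\Gamma(y)-c_\Gamma|\lesssim r_m\,\dist(y,\de B_{r_m})$. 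The remainder $w_{\tilde R}$ has global Lipschitz constant $\|\nabla\tilde R\|_{L^\infty}\,|B_{r_m}|\lesssim r_m^n$, so fixing any $y_0\in\de B_{r_m}$ gives $|w_{\tilde R}(y)-w_{\tilde R}(y_0)|\lesssim r_m^{n+1}$ on $E_m\sdif B_{r_m}$, since this set lies within a tube of thickness $d\ll r_m$ around $\de B_{r_m}$ (whose diameter is $\simeq r_m$).

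Finally, choosing $c:=c_\Gamma+w_{\tilde R}(y_0)$ combines the two estimates into
\[
\|w-c\|_{L^\infty(E_m\sdif B_{r_m})}\lesssim r_m\,d+r_m^{n+1}\lesssim \frac{|E_m\sdif B_{r_m}|}{r_m^{n-2}}+r_m^{n+1},
\]
and integrating this pointwise bound against $|\chi_{B_{r_m}}-\chi_{E_m}|$, then feeding back into the minimality inequality $\Per(E_m)-\Per(B_{r_m})\leq\gamma\big(\NL(B_{r_m})-\NL(E_m)\big)$, yields \eqref{e.deficit periodic}. The main obstacle is that on $\T^n$ the potential $w$ is \emph{not} radial --- the torus is only translation-symmetric, not rotationally-symmetric --- so only the Newtonian piece $w_\Gamma$ enjoys exact cancellation on $\de B_{r_m}$; the extra term $\gamma r_m^{n+1}|E_m\sdif B^{opt}_{E_m}|$ in \eqref{e.deficit periodic} is precisely the price paid for the smooth periodic correction $w_{\tilde R}$.
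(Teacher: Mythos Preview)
Your proof is correct and follows essentially the same route as the paper: start from the cross term in \eqref{e.NL diff}, split $G$ into the Newtonian piece and the smooth remainder, use the radiality and the gradient bound $|\nabla(\Gamma*\chi_{B_{r_m}})|\lesssim r_m$ together with \eqref{e.psi} for the first, and the smoothness of the regular part plus the volume constraint for the second. The only cosmetic differences are that you make the subtraction of the constant $c$ explicit (the paper leaves it implicit when it writes $\|w\|_{L^\infty(E_m\sdif B_{r_m})}\lesssim r_m\|\psi_m\|_{L^\infty}$), and you bound the regular contribution via the Lipschitz constant of $\tilde R*\chi_{B_{r_m}}$ rather than via a one–term Taylor expansion of $R(x,y)$ around $(0,0)$; both yield the same $O(r_m^{n+1})|E_m\sdif B_{r_m}|$.
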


\begin{proof}
Recalling \eqref{e.NL diff estimate} in Proposition~\ref{p.quasi min}, and assuming as above $B_{r_m}=B^{opt}_{E_m}$, we have that
\begin{align*}%\label{e.NL diff estimate}
\NL(B_{r_m})-\NL(E_m)\lesssim
%\stackrel{\eqref{e.NL diff estimate}}{\lesssim}
&{} \int_\Om\int_\Om {G}(x,y)\,\chi_{B_{r_m}}(x)\,\big(\chi_{B_{r_m}}(y)-\chi_{E_m}(y)\big)\ dx\ dy\notag\\
=&{}\int_\Om\int_\Om \Gamma(|x-y|)\,\chi_{B_{r_m}}(x)\,\big(\chi_{B_{r_m}}(y)-\chi_{E_m}(y)\big)dx\, dy+\notag\\
&+\int_\Om\int_\Om R(x,y)\,\chi_{B_{r_m}}(x)\,\big(\chi_{B_{r_m}}(y)-\chi_{E_m}(y)\big)dx\, dy.
\end{align*}
By the direct computation of $w=\Gamma*\chi_{B_{r_m}}$ in \eqref{e.radial 2} and \eqref{e.radial n} (in particular, $|\nabla w|\lesssim r_m$ in a neighborhood of $\de B_{r_m}$), we get,
\begin{equation*}%\label{e.wopt}
\|w\|_{L^{\infty}(E_m\sdif {B_{r_m}})}\lesssim r_m\, \|\psi_m\|_{L^\infty(\de {B_{r_m}})}
%\dist(\partial E_m,\partial B_m) 
\stackrel{\eqref{e.psi}}{\lesssim} \frac{|E_m\sdif {B_{r_m}}|}{r_m^{n-2}}.
\end{equation*}
Moreover,
\begin{align*}
&\int_\Om\int_\Om R(x,y)\,\chi_{B_{r_m}}(x)\,\big(\chi_{B_{r_m}}(y)-\chi_{E_m}(y)\big)dx\, dy\notag\\
&=R(0,0)\int_\Om\int_\Om \chi_{B_{r_m}}(x)\,\big(\chi_{B_{r_m}}(y)-\chi_{E_m}(y)\big)dx\, dy+O(r_m^{n+1})|E_m\sdif B_{r_m}|\notag\\
&\simeq r_m^{n+1}\;|E_m\sdif B_m|,
\end{align*}
where we used that $\int\chi_{B_{r_m}}-\int \chi_{E_m}=0$. Hence, gathering together the previous estimates, by the minimality of $E_m$, it follows:
\begin{align*}
\Per(E_m)-\Per({B_{r_m}})&\leq \gamma\,\NL({B_{r_m}})-\gamma\,\NL(E_m)\\
&\simeq \gamma\,\|w\|_{L^\infty({E_m}\sdif {B_{r_m}})} \,|{E_m}\sdif {B_{r_m}}|+\gamma\,r_m^{n+1}|E_m\sdif {B_{r_m}}|\notag\\
&\simeq\gamma\,\frac{|{E_m}\sdif {B_{r_m}}|^2}{ r_m^{n-2}\,}+\gamma\,r_m^{n+1}\,|{E_m}\sdif {B_{r_m}}|.
\end{align*}
\end{proof}

\subsection{Proof of Theorem~\ref{t.periodic}}
In order to prove \eqref{e.rate periodic} we use the quantitative isoperimetric inequality and the improved estimate in Proposition~\ref{p.deficit} to infer that
\begin{align*}
|{E_m}\sdif B_{E_m}^{opt}|^2\;&\stackrel{\mathclap{\eqref{e.iso2}}}{\lesssim}\; r_m^{n+1}\big(\Per({E_m})-\Per(B_{E_m}^{opt})\big)\\
&\stackrel{\mathclap{\eqref{e.deficit periodic}}}{\lesssim} \gamma\,r_m^3\,|{E_m}\sdif B_{E_m}^{opt}|^2+\gamma\,r_m^{2n+2}\,|{E_m}\sdif B_{E_m}^{opt}|.
\end{align*}
This implies $|{E_m}\sdif B_{E_m}^{opt}|\lesssim\gamma\,r_m^{2n+2}$,
% \begin{equation*}
% |{E_m}\sdif B_{E_m}^{opt}|\lesssim\gamma\,r_m^{2n+2},
% \end{equation*}
that is, by \eqref{e.psi},
\begin{equation}\label{e.L1 estimate2}
\|\psi_m\|_{L^1(\de B_1)}\lesssim\gamma\,r_m^{n+3}.
\end{equation}
From the $C^{3,\alpha}$ regularity of $\psi_m$ proved in Proposition~\ref{p.improved}, the convexity of $E_m$ and \eqref{e.rate periodic} follows. Similarly, by comparing the energy of $E_m$ with that of $B_{r_m}$, using Proposition~\ref{p.quasi min}, Proposition~\ref{p.deficit} and Lemma~\ref{l.balls}, \eqref{e.energy periodic} follows:
\begin{align*}
F_{\gamma,m}(E_m)&=F_{\gamma,m}(B_{r_m})+O(\gamma\,r_m^{3n+3})\\
& =
\begin{cases}
2\,\pi\,r_m+\frac{\pi\,\gamma}{2}\,r_m^4\,\log r_m+\gamma\left(-\frac{1}{8}+\pi^2\,h(0)\right)\,r_m^4+O(r_m^6),& \text{if } \ n=2\\
n\,\omega_n\,r_m^{n-1}+\frac{2\,\gamma\,\omega_n}{4-n^2}\,r_m^{n+2}+\gamma\;\omega_n^2\,r_m^{2n}\,h(0)+O(r_m^{2n+2}),& \text{if } \ n\geq3.\hspace{1.8cm}\qed
\end{cases}
\end{align*}
%%%%%%%%%%%%%%%%%%%%%%% Round sphere %%%%%%%%%%%%%%%%%%%%%%%%%%%%%%%%%
%%%%%%%%%%%%%%%%%%%%%%%%%%%%%%%%%%%%%%%%%%%%%%%%%%%%%%%%%%%%%%%%
%%%%%%%%%%%%%%%%%%%%%%%%%%%%%%%%%%%%%%%%%%%%%%%%%%%%%%%%%%%%%%%%
%%%%%%%%%%%%%%%%%%%%%%%%%%%%%%%%%%%%%%%%%%%%%%%%%%%%%%%%%%%%%%%%
%%%%%%%%%%%%%%%%%%%%%%%%%%%%%%%%%%%%%%%%%%%%%%%%%%%%%%%%%%%%%%%%
\section{Strong convergence to round spheres}\label{s.round}
In this section we prove Theorem~\ref{t.main-0} (i), (ii) and (iii). 

We remark that, in this general case, before we may argue as in the proof of Theorem \ref{t.periodic}, we need to show that the minimizers of $F_{\gamma,m}$ are well-contained in $\Omega$. This is crucial in order to apply the regularity results in \S~\ref{s.reg}, which hold under the hypothesis of being at a fixed distance to the boundary. The proof is based on the analysis of the non local energy of a minimizer when it gets close to $\de\Omega$. To this extent a key role will be played by the estimates \eqref{e.Robin}, \eqref{e.robin0} and \eqref{e.robin2}. 

\subsection{Localization of minimizers}
In the next proposition we prove that the minimizers of $F_{\gamma,m}$ are well-contained in $\Omega$. 

\begin{proposition}\label{p.local}
There exist $\delta_0,r_0>0$ such that the following holds. Assume $r_m\leq r_0/3$ and $\gamma\,r_m^3\,|\log r_m|\leq\delta_0$ if $n=2$ and $\gamma \,r_m^3 \leq \delta_0$ if $n\geq3$.
% \begin{equation*}
% \gamma\,r_m^{3}|\log r_m|<\delta_0 \quad\mbox{ if }\quad n=2\quad\mbox{ and }\quad\gamma\,r_m^{3}<\delta_0\quad\mbox{ if }\quad n\geq 3.
% \end{equation*}
Then, every minimizer $E_m$ of $F_{\gamma,m}$ satisfies
\begin{equation}\label{e.local}
E_m\subset B_{3\,r_m}(q)\quad\text{for some }q\in\Omega_{r_0}.
\end{equation}
\end{proposition}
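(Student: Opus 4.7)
The proof proceeds in three steps: (i) an $L^1$-closeness of $E_m$ to a ball $B(q_m,r_m)$ in $\R^n$ (not necessarily inside $\Omega$); (ii) a geometric containment $E_m\subset B(q_m,3r_m)$ via the density estimates for $\Lambda$-minimizers; (iii) a translation argument using the blow-up of the Robin function near $\partial\Omega$ to force $q_m\in\Omega_{r_0}$.

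\textit{Step 1 ($L^1$-control).} I would test the minimality of $E_m$ against $\chi_{B_{r_m}(p^*)}$ with $p^*\in\cH\subset\Omega_{r_0}$ (admissible since $r_m\leq r_0/3$). Using \eqref{e.exp ball}, Lemma~\ref{l.balls}, and the smallness of $\gamma r_m^3$ (with an extra $|\log r_m|$ factor when $n=2$),
\[
F_{\gamma,m}(E_m)\leq n\omega_n r_m^{n-1}+C\delta_0 r_m^{n-1}.
\]
Since $\NL(E_m)=\int_\Omega|\nabla v|^2\geq 0$ for the potential $v$ in \eqref{e.potential}, the perimeter deficit satisfies $\Per(E_m)-n\omega_n r_m^{n-1}\leq C\delta_0 r_m^{n-1}$, and the sharp quantitative isoperimetric inequality \eqref{e.iso2} applied in $\R^n$ produces a ball $B(q_m,r_m)\subset\R^n$ with
\[
|E_m\sdif B(q_m,r_m)|\leq C\sqrt{\delta_0}\, r_m^n.
\]
Because $E_m\subset\Omega$, this forces $|B(q_m,r_m)\setminus\Omega|\leq C\sqrt{\delta_0}\, r_m^n$, so for $\delta_0$ small enough one may, up to an $O(r_m)$ recentering, take $\dist(q_m,\partial\Omega)\geq 2r_m$.

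\textit{Step 2 (geometric containment).} Setting $H_m:=(E_m-q_m)/r_m\subset\Omega_m:=(\Omega-q_m)/r_m$, one has $|H_m\sdif B_1|\leq C\sqrt{\delta_0}$ and $\dist(B_1,\partial\Omega_m)\geq 1$. Lemma~\ref{l.vol constr} with $H=B_1$ then makes $H_m$ a $\Lambda$-minimizer of the perimeter with uniformly bounded $\Lambda$. The density lower bound in Proposition~\ref{p.regularity}(ii), combined with $|H_m\sdif B_1|\ll 1$, forces $H_m\subset B_3$, i.e.\ $E_m\subset B(q_m,3r_m)$.

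\textit{Step 3 and main obstacle.} Suppose for contradiction $\dist(q_m,\partial\Omega)<r_0$. With $T:=p^*-q_m$, the translate $\tilde E_m:=E_m+T$ sits in $B(p^*,3r_m)\subset\Omega$, so $\tilde E_m\in\cC_m$ and $\Per(\tilde E_m)=\Per(E_m)$. Writing $G=-\Gamma+R$ and using translation invariance of $|x-y|$ to cancel the $\Gamma$ contribution,
\[
\NL(\tilde E_m)-\NL(E_m)=\iint[R(x+T,y+T)-R(x,y)]\,\chi_{E_m}(x)\chi_{E_m}(y)\,dx\,dy.
\]
On $\tilde E_m\times\tilde E_m\subset\Omega_{r_0/2}^2$, $R$ is uniformly bounded; on $E_m\times E_m\subset B(q_m,3r_m)^2$, the estimates \eqref{e.Robin}, \eqref{e.robin2} together with continuity of $R$ near the diagonal give $R(x,y)\gtrsim h(q_m)\simeq|\Gamma(\dist(q_m,\partial\Omega))|$, which blows up as $\dist(q_m,\partial\Omega)\to 0$. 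Hence the integral is $\leq(C-c\,h(q_m))\,r_m^{2n}$, and the minimality $\NL(\tilde E_m)\geq \NL(E_m)$ forces $h(q_m)\leq C$, i.e.\ $\dist(q_m,\partial\Omega)\geq r_0$ by \eqref{e.robin2}. \textbf{Main obstacle:} Step~2 is the most delicate, since Lemma~\ref{l.vol constr} presupposes $H_m$ to sit well inside $\Omega_m$, precisely what the proposition is trying to prove. The circularity is broken by the quantitative $L^1$-control from Step~1, which pins $q_m$ to within $\sim r_m$ of the interior of $\Omega$ and justifies the rescaling at a fixed scale. A secondary subtle point is the positivity of $R$ in a small ball near a boundary point, needed in Step~3, which follows from $R|_{\mathrm{diag}}=h>0$, analyticity of $R$, and \eqref{e.Robin}.
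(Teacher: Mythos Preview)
Your three-step plan matches the paper's proof closely. Step~1 (quantitative isoperimetric inequality plus a recentering into $\Omega$) and Step~3 (translation argument exploiting the blow-up of $R$ near $\partial\Omega$) are essentially what the paper does; the paper simply places part of the Robin-function distance estimate already inside Step~1, obtaining $\dist(p_m,\partial\Omega)\gtrsim \delta_0^{-1/((n+1)(n-2))}r_m$ there rather than postponing it.

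The one substantive difference is Step~2. You claim the density lower bound alone forces $H_m\subset B_3$. The paper instead splits Step~2 in two: the density estimate is invoked only for $x\in H_m\cap(B_3\setminus B_2)$ (where $B_R(x)\subset B_4\subset\Omega_m$ thanks to the strengthened Step~1 bound), yielding $H_m\cap(B_3\setminus B_2)=\emptyset$; then a separate \emph{dilation competitor} argument (rescaling $J_m:=H_m\cap B_2$ to the correct volume and comparing energies) rules out $H_m\setminus B_3$. Your shortcut is in fact valid, but not by quoting Proposition~\ref{p.regularity}(ii) verbatim: that statement carries the restriction $B_r(x)\subset\Omega$, which would not exclude pieces of $H_m$ lying near $\partial\Omega_m$ far from the origin. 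What saves you is that here the perimeter is computed in $\R^n$ and the competitor $H_m\setminus B_r(x)$ is always admissible (it lies in $\Omega_m$ and satisfies $|(H_m\setminus B_r(x))\sdif B_1|\leq 2\Lambda^{-1}$ for small $r$), so the standard differential-inequality proof of the density lower bound runs for \emph{every} $x\in H_m$. You should make this explicit; otherwise a reader will rightly object that density estimates, as stated, do not by themselves exclude disconnected far-away components. The paper's dilation argument sidesteps this subtlety at the cost of an extra competitor computation.

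Finally, the ``main obstacle'' you flag is a non-issue: Lemma~\ref{l.vol constr} requires $\dist(H,\partial\Omega_m)\geq 1$ for the \emph{fixed reference set} $H=B_1$, which you already have from Step~1; it imposes no a~priori localization on $H_m$ itself, so there is no circularity.
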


\begin{proof}
We prove the result in the case $n\geq 3$, the case $n=2$ being similar up to minor changes. The proof consists of three steps.

\medskip

\textsc{Step 1.} \textit{If $\delta_0$ and $r_0$ are sufficiently small, then there exists a ball $B_m:=B_{r_m}(p_m)\subset\Omega$ such that
\begin{equation}\label{e.first local}
|B_m\sdif E_m|\lesssim \delta_0^{1/(n+1)}\,r_m^n\quad\text{and}\quad
\dist(p_m,\de\Omega)\gtrsim \delta_0^{-1/((n+1)(n-2))}\,r_m.
\end{equation}
}

For any ball of radius $B_{r_m}(p)\subset\Omega$ (note that such a ball exists if $r_0$ is choosen sufficiently small), by \eqref{e.deficit_2} it holds
\begin{equation}\label{e.deficit_3}
\Per({E_m})-\Per({B_{r_m}(p)})\lesssim \gamma\,r_m^{2}|E_m\sdif B_{r_m}(p)|\lesssim\gamma\,r_m^{n+2}.
\end{equation}
On the other hand, by the quantitative isoperimetric inequality we have
\begin{equation}\label{e.deficit_4}
|E_m\sdif B_{E_m}^{opt}|^2\stackrel{\eqref{e.iso2bis}}{\lesssim} r_m^{n+1}\left(\Per(E_m)-\Per(B_{E_m}^{opt})\right)
\stackrel{\eqref{e.deficit_3}}{\lesssim} \gamma \,r_m^{2n+3}.
\end{equation}
Note that $B_{E_m}^{opt}$ may not be contained in $\Omega$.
Nevertheless, since $B_{E_m}^{opt}\setminus\Omega\subset B_{E_m}^{opt}\sdif E_m$, it follows from \eqref{e.deficit_4} that
\[
|B_{E_m}^{opt}\setminus\Omega|\lesssim (\gamma\,r_m^3)^{1/2}\,r_m^n\lesssim\delta_0^{1/2}\,r_m^n.
\]
We can now use a simple geometric argument proved in Lemma~\ref{l.archimede} below to deduce the existence of a vector $v\in\R^n$ such that
\[
|v|\simeq \delta_0^{1/(n+1)}r_m \quad \text{and} \quad B_m:=B_{E_m}^{opt}+v\subset\Omega.  
\]
Setting $p_m$ as the center of $B_m$, namely $B_m=B_{r_m}(p_m)$, this amounts to say that $p_m\in\Omega_{r_m}$.
We claim that $B_m$ satisfies \eqref{e.first local}.
Note that, since the measure of the symmetric difference between two balls is linear with the distance of the centers, we infer the first conclusion in \eqref{e.first local}, namely
\begin{equation}\label{e.loc 1.1}
|E_m\sdif B_m|\leq |E_m\sdif B_{E_m}^{opt}|+|B_{E_m}^{opt}\sdif B_m|\lesssim \left(\delta_0^{1/2}+\delta_0^{1/(n+1)}\right)\,r_m^n\lesssim \delta_0^{1/(n+1)}\,r_m^n.
\end{equation}
On the other hand, appealing to the minimality of $E_m$ (now $\chi_{B_m}\in\cC_m$) and using \eqref{e.exp ball}, we get:
\begin{align}\label{e.ball vs E}
% F_{\gamma,m}(B_m'')-F_{\gamma,m}(B_{m}^{\min})
\gamma\,(\omega_n\,r_m^{n})^2\,g_{r_m}(p_m)-\gamma\,(\omega_n\,r_m^{n})^2\,\min_{p\in\Omega_{r_m}} g_{r_m}(p)&=F_{\gamma,m}(B_m)-\min_{p\in\Omega_{r_m}} F_{\gamma,m}(B_{r_m}(p))\notag\\
&\leq F_{\gamma,m}(B_m)-F_{\gamma,m}(E_m)
\notag\\
% &= \Per(B_m)-\Per(E_m)+
&\leq\gamma\,NL(B_m)-\gamma\,NL(E_m)
\notag\\
&\lesssim
% \stackrel{\mathclap{\eqref{e.deficit_1},\eqref{e.loc 1.1}}}{\lesssim}
\gamma\,\delta_0^{1/(n+1)}\,r_m^{n+2},
\end{align}
where in the last inequality we have used Proposition \ref{p.quasi min} and \eqref{e.monotone-stima}.
Then, by Lemma \ref{l.balls} and \eqref{e.Gamma} we obtain the second inequality in \eqref{e.first local}:
\begin{equation*}%\label{e.loc 1.2}
\left(\dist(p_m,\de\Omega)+r_m\right)^{2-n}\lesssim \delta_0^{1/(n+1)}\,r_m^{2-n}+\min_{p\in\Omega_{r_m}} g_{r_m}(p)\lesssim \delta_0^{1/(n+1)}\,r_m^{2-n}.
\end{equation*}

\medskip

\textsc{Step 2.} \textit{The whole $E_m$ is well-contained in $\Omega$, i.e.~
\begin{equation}\label{e.localization}
E_m\subset B_{3r_m}(p_m).
%\dist(E_m,\de\Omega)\gtrsim \delta_0^{-1/((n+1)(n-2))}\,r_m.
\end{equation}
}

With the notation as in \S~\ref{ss.volume}, by \eqref{e.1st iso} we have that $|H_m\sdif B_1|\lesssim \gamma r_m^3< \delta_0$. Then, using Lemma~\ref{l.vol constr}, the sequence of sets $H_m$ turns out to be a sequence of uniform $\Lambda$-minimizer of the perimeter in $\Omega_m$. 
%$H_m$ turns out to be a $(\gamma\,r_m^3)$-minimizer of the perimeter at fixed volume in $\Omega$, i.e.
%\begin{equation}\label{e.deficit_5}
%\Per({H_m})-\Per({G})\lesssim \gamma\,r_m^{3}\,|{H_m}\sdif {G}|\quad\forall \; G\subset\Omega_m, \; |G|=|H_m|.
%\end{equation}
Moreover, by \eqref{e.first local}, if $\delta_0$ is small enough, we have that
\begin{equation}\label{e.weak dist}
\dist(0, \de\Omega_m) \gtrsim \delta_0^{-1/(n+1)(n-2))}\geq 4.
\end{equation}
As a consequence, we are in position to use the density estimate \eqref{e.density} in Proposition~\ref{p.regularity} according to which there exists $R>0$ (without loss of generality we assume $R<1$) such that, for every $x\in H_m\cap (B_3\setminus B_2)$,
\[
% c(n)\,R^n\leq c(n)\leq |H_m\cap B_{R}(x)|\leq |H_m\cap B_{1}(x)|.
c(n)\,R^n\leq |H_m\cap B_{R}(x)|\leq |H_m\cap B_{1}(x)|.
\]
Therefore, since for every $x\in H_m\cap (B_3\setminus B_2)$ it holds $B_{1}(x)\cap B_1=\emptyset$, we get:
\[
c(n)\,R^n\leq |H_m\cap B_{1}(x)|\leq |H_m\sdif B_1|\stackrel{\eqref{e.first local}}{\lesssim} \delta_0^{1/(n+1)}.
\]
Clearly, if $\delta_0$ is small enough, this inequality cannot be satisfied, thus implying $H_m\cap (B_3\setminus B_2)=\emptyset$.
% \begin{equation}\label{e.loc 2.1}
% H_m\cap (B_3\setminus B_2)=\emptyset.
% \end{equation}
In order to complete the proof of \eqref{e.localization}, we need to show that $H_m \cap (\Omega_m\setminus B_3)=\emptyset$ as well. To this purpose, we reason by contradiction and show that, in this eventuality, a suitable rescaling of $J_m:=H_m\cap B_2$ would have lower energy than $H_m$.
We fix the notation:
% $K_m:=H_m\setminus B_2=H_m\setminus J_m$, and $L_m:=\rho_m\,J_m$
\[
K_m:=H_m\setminus J_m\quad\text{and}\quad L_m:=\rho_m\,J_m,
\]
with $\rho_m\geq1$ such that $|L_m|=|H_m|$. 
Note first the following two observations:
(a) by a simple computation on the volumes, it follows that
\begin{equation}\label{e.rho_m}
\rho_m-1\lesssim |K_m|;
\end{equation}
(b) consequently, we can estimate $|L_m\sdif J_m|$ in the following way:
\begin{align}\label{e.dilation}
|L_m\sdif J_m|&=\int_{\R^n}|\chi_{J_m}(\rho_m^{-1}x)-\chi_{J_m}(x)|\,dx\notag\\
&\leq \int_{B_3}\int_{0}^{1}|D\chi_{J_m}(sx+(1-s)\,\rho_m^{-1}x)|\,(1-\rho_m^{-1})\,|x|\,ds\,dx\notag\\
&\lesssim(\rho_m-1)\,\Per(J_m)\lesssim |K_m|,
\end{align}
where, in order to rigourously justify the second inequality without referring to fine properties of functions of bounded variations, it is enough to consider an approximation with smooth functions and to pass to the limit.
% \begin{itemize}
% \item[(a)] since $|J_m\sdif B_1|\leq |H_m\sdif B_1|\lesssim \delta_0^{2/(n+1)}$, for $\delta_0$ sufficiently small $J_m$ is a $C^{1,\alpha}$ graph over $B_1$ (otherwise Proposition~\ref{p.regularity} would be contradicted);
% \item[(b)] by a simple computation we deduce that
% \begin{equation}\label{e.rho_m}
% \rho_m-1\lesssim |K_m|.
% \end{equation}
% \end{itemize}
Recalling that $F_{\gamma,m}(E_m)=r_m^{n-1}\,F_{\gamma\,r_m^3,m}(H_m)$, we can compare the energies of $H_m$ and $J_m$ as follows: 
\begin{align}
F_{\gamma\,r_m^3,m}(L_m)=&{}\;\rho_m^{n-1}\,\Per(J_m)+\gamma\,r_m^3\,NL(L_m)\notag\\
\leq&{}\;
\rho_m^{n-1}\,\Per(H_m)-\rho_m^{n-1}\,\Per(K_m)+\gamma \,r_m^3\,C\,|L_m\sdif  H_m|+\gamma\,r_m^3\,NL(H_m)\notag\\
\stackrel{\mathclap{\eqref{e.rho_m}}}{\leq}&{}\;
(1+C\,|K_m|)\,\Per(H_m)-\rho_m^{n-1}\Per(K_m)+\notag\\
&+\gamma \,r_m^3\,C\,\big(|L_m\sdif J_m|+|K_m|\big)+\gamma\,r_m^3\,NL(H_m)\notag\\
\stackrel{\mathclap{\eqref{e.dilation}}}{\leq}&{}\; F_{\gamma\,r_m^3,m}(H_m)+C\,|K_m|+C\,\delta_0\,|K_m|-C\,|K_m|^{(n-1)/n}\notag\\
<&{}\;F_{\gamma\,r_m^3,\omega_n}(H_m),
\end{align}
if $\delta_0$ is sufficiently small since $|K_m|\leq \delta_0^{1/(n+1)}<1$.
Clearly, this is a contradiction with the minimality of $H_m$, thus proving that $H_m\setminus B_3=\emptyset$ or, after scaling by $r_m$, that \eqref{e.localization} holds true.

\medskip

\textsc{Step 3.} \textit{Proof of \eqref{e.local}.}
%Denote by $q_m$ the barycenter of $E_m$ and 
We set $E_m':=E_m-p_m$ and, as a consequence of \eqref{e.localization}, we note that $E_m'\subset B_{3r_m}$. For all $q\in\Omega_{3r_m}$, let us set $E_m(q):=E_m'+q$ (in particular, $E_m(q)\subset\Omega$ and $E_m=E_m(p_m)$). We may write the energy of $E_m(q)$ as
\begin{multline}\label{e.energy100}
F_{\gamma,m}(E(q))=\Per(E_m')+\gamma\,\int\int\Gamma(|x-y|)\,\chi_{E_m'}(x)\,\chi_{E_m'}(y)\,dx\,dy\,+\\
+\gamma\,\int\int R(x+q,y+q)\,\chi_{E_m'}(x)\,\chi_{E_m'}(y)\,dx\,dy.
\end{multline}
Since $E_m$ minimizes $F_{\gamma,m}$, we have that $F_{\gamma,m}(E_m(p_m))\leq F_{\gamma,m}(E_m(q))$ for every $q\in\Omega_{3r_m}$. By \eqref{e.energy100} this implies that
\begin{multline}\label{e.energyNL}
\int\int R(x-p_m,y-p_m)\,\chi_{E_m'}(x)\,\chi_{E_m'}(y)\,dx\,dy\\
\leq
\int\int R(x-q,y-q)\,\chi_{E_m'}(x)\,\chi_{E_m'}(y)\,dx\,dy.
\end{multline}
In view of $E_m'\subset B_{3r_m}$, \eqref{e.Robin} and \eqref{e.Gamma}
%, i.e.
%\[
%R(x,y)\gtrsim (\dist(q,\de\Omega)+r_m)\quad\forall \;x,y\in E_m(q),
%\]
the last inequaltiy \eqref{e.energyNL} implies that $p_m$ is contained in a compact subset of $\Omega$, thus proving \eqref{e.local}.
\end{proof}

\begin{remark}\label{r.optimal ball}
It follows a posteriori that the optimal balls $B_{E_m}^{opt}$ for $E_m$ are, in fact, well-contained in $\Omega$ and \eqref{e.deficit_4} holds, i.e.
\begin{equation}\label{e.deficit100}
\dist(B_{E_m}^{opt},\de\Omega)\gtrsim \delta_0^{-1/((n+1)(n-2))} \quad\text{and}\quad |B_{E_m}^{opt}\sdif E_m|\lesssim \delta_0^{1/2}\,r_m^n.
\end{equation}
\end{remark}

The following is the geometric lemma used in the localization argument.

\begin{lemma}\label{l.archimede}
Let $\Omega\subset\R^n$ be an open set with $C^2$ boundary.
Then, there exist $r_0,h_0>0$ with this property: for $r<r_0$, $h\leq h_0$ and $p\in\Omega$ such that $|B_r(p)\setminus\Omega|\leq h\,r^n$, there exists $v\in\R^n$ with $|v|\lesssim h^{2/(n+1)}r$ such that $B_r(p+v)\subset\Omega$.
\end{lemma}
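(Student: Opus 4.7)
The plan is to carry out a local, quantitative comparison with the flat spherical cap, using the $C^2$ structure of $\de\Omega$. Intuitively, if $|B_r(p)\setminus\Omega|\leq h r^n$, the protrusion of $B_r(p)$ outside $\Omega$ will be close to a spherical cap of volume $hr^n$, hence of depth $\sim h^{2/(n+1)}r$, and translating $p$ along the inward normal at the closest boundary point by this amount should slide $B_r(p)$ into $\Omega$.

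I would first set $d:=\dist(p,\Omega^c)$. The case $d\geq r$ is trivial: $B_r(p)\subset\Omega$ and one takes $v=0$. Otherwise let $q\in\de\Omega$ realize the distance and $\nu$ be the inward normal at $q$; choosing $r_0$ smaller than the reach of $\de\Omega$, local coordinates around $q$ present $\de\Omega$ as a graph $x_n=f(x')$ with $f(0)=0$, $\nabla f(0)=0$, and $|f(x')|\leq K|x'|^2/2$ for a constant $K$ depending only on $\Omega$ (a uniform bound on its second fundamental form). In these coordinates $p=(0,d)$.

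The key step is to compare $|B_r(p)\setminus\Omega|=|B_r(p)\cap\{x_n\leq f(x')\}|$ with the flat cap
\[
V_{\mathrm{cap}}(r,t):=|B_r(p)\cap\{x_n\leq 0\}|, \qquad t:=r-d.
\]
Their symmetric difference is contained in the slab $\{|x_n|\leq K|x'|^2/2\}$, whose intersection with $B_r(p)$ projects onto $\{|x'|^2\leq r^2-d^2\}\subset\{|x'|^2\leq 2rt\}$. A direct estimate bounds this mismatch by $CK(rt)^{(n+1)/2}$, which, by the standard lower bound $V_{\mathrm{cap}}(r,t)\geq c\,r^{(n-1)/2}t^{(n+1)/2}$, is an $O(Kr)$ \emph{multiplicative} fraction of $V_{\mathrm{cap}}(r,t)$ itself. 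Choosing $r_0$ so small that this fraction is at most $1/2$ yields $V_{\mathrm{cap}}(r,t)\leq 2hr^n$, and inverting the cap formula gives $t=r-d\lesssim h^{2/(n+1)}r$.

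Finally I would take $v:=t\nu$. For any $y=(y',y_n)\in B_r(p+v)$ one has $y_n\geq r-\sqrt{r^2-|y'|^2}\geq |y'|^2/(2r)$; shrinking $r_0$ further so $Kr\leq 1$, this strictly exceeds $K|y'|^2/2\geq f(y')$, so $y\in\Omega$ in the local chart, and since $r$ is below the reach of $\de\Omega$ the ball $B_r(p+v)$ remains inside the coordinate patch and hence in $\Omega$, with $|v|=t\lesssim h^{2/(n+1)}r$. The delicate point will be the multiplicative (rather than additive) character of the curvature error in the cap comparison: the strip accounting for the mismatch shares the $t^{(n+1)/2}$ scaling with the cap itself, so the error is $O(Kr)\cdot V_{\mathrm{cap}}$ and not an uncontrollable additive $O(Kr^{n+1})$ term that would degrade the bound for $h\ll(Kr)^{n+1}$.
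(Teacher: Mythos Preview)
Your argument is correct and follows essentially the same route as the paper's proof: both reduce to the flat spherical cap via a quadratic bound $|f(x')|\lesssim K|x'|^2$ on the boundary graph, deduce the depth $t\simeq h^{2/(n+1)}r$, and translate along the inward normal by $t$. The only cosmetic difference is that the paper first rescales to a unit ball and phrases the curvature smallness as $|A_{\de\Omega}|\leq\eps(n)\,r_0^{-1}$, whereas you work directly at scale $r$; your explicit observation that the curvature error is a multiplicative $O(Kr)$ fraction of $V_{\mathrm{cap}}$ is exactly what the paper uses implicitly.
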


\begin{proof}
The main argument in the proof is given by an elementary consideration.
Assume first that $\de \Omega \cap B_1(p)$ is flat.
If $|B_1(p)\setminus\Omega|\leq h$ and $h\leq h_0$ is small enough, then $\dist(q,r)\simeq h^{2/(n+1)}$.
To see this, one can easily compute the exact expression for $\beta:=\dist(q,r)$ solving the equation
\[
h=(n-1)\,\omega_{n-1}\int_{0}^{\sqrt{2\beta-\beta^2}}\left(\sqrt{1-r^2}-1+\beta\right)r^{n-2}\,dx.
\]
Alternatively, one can simply notice that $\dist(q,s)\simeq\beta^{1/2}$ and the volume of $B_1(p)\setminus\Omega$ is comparable with that of the cylinder with base $\de\Omega\cap B_1(p)$ and height $\beta$ (in fact, the cylinder with half the height and half the radius of the base is contained in $B_1(p)\setminus\Omega$). Hence, $h\simeq \beta^{(n+1)/2}$ from which the conclusion. Clearly, $v=-\beta\,e_n$ fulfills the conclusion of the lemma.

If $\Omega$ is not flat, we need to restrict the size of the balls we consider choosing $r_0$ small enough to have $|A_{\de\Omega}|\leq \eps(n)\,r_0^{-1}$,
% \begin{equation}%\label{e.II}
% |A_{\de\Omega}|\leq \eps(n)\,r_0^{-1},
% \end{equation}
where $A_{\de\Omega}$ is the second fundamental form of $\de \Omega$ and $\eps(n)>0$ is a dimensional constant to be choosen momentarily.
Consider $r\leq r_0$ and $p$ as in the statement.
By a simple rescaling of the variable by a factor $r$ and a translation, we find $B_1(p')$ and new domain $\Omega'$ such that $|B_1(p')\setminus\Omega'|\leq h\leq h_0$ and
\begin{gather}\label{e.flat}
\de\Omega'\cap B_1(p')\subset \big\{(x,y)\in\R^{n-1}\times\R\,:\,-\eps(n)\,|x|^2\leq y\leq \eps(n)\,|x|^2\big\}.
% ,\\
% |B_1(p')\setminus\Omega'|\leq h\leq h_0.
\end{gather}
% 
% 
% \begin{itemize}
% \item[(a)] $\Omega'\cap B_1(p')\subset \{(x,y)\in\R^{n-1}\times\R\,:\,-\eps(n)\,|x|^2\leq y\leq \eps(n)\,|x|^2\}$;
% % \[
% % \Omega'\cap B_1(p')\subset \big\{(x,y)\in\R^{n-1}\times\R\,:\,-\eps(n)\,|x|^2\leq y\leq \eps(n)\,|x|^2\big\};
% % \]
% \item[(b)] $|B_1(p')\setminus\Omega'|\leq h\leq h_0$.
% \end{itemize}
Note that, by an analogous computation as above (now $\beta:=\dist(q',r')$), we have that
\begin{multline*}
(n-1)\,\omega_{n-1}\int_{0}^{\sqrt{2\beta-\beta^2}}\left(\sqrt{1-r^2}-1+\beta-\eps(n)\,r^2\right)r^{n-2}\,dx
\leq h\\
\leq
(n-1)\,\omega_{n-1}\int_{0}^{\sqrt{2\beta-\beta^2}}\left(\sqrt{1-r^2}-1+\beta+\eps(n)\,r^2\right)r^{n-2}\,dx.
\end{multline*}
One can easily compute (or argue by elementary geometric consideration as before) that $h\simeq\beta^{(n+1)/2}$.
Note that, setting as before $v':=-\beta\,e_n$, we have that $B_1(p'+v')\subset\Omega'$ because of \eqref{e.flat}.
Scaling back to $\Omega$, the conclusion hence follows.
\end{proof}

\subsection{Proof of Theorem~\ref{t.main-0}: part I}
Here we prove Theorem~\ref{t.main-0} (i), (ii) and (iii).

The proof of (i) follows from the same arguments in Theorem~\ref{t.periodic}. Indeed, thanks to the localization in Proposition~\ref{p.local}, we are now in the same position to discard the boundary effects. More precisely, by Proposition~\ref{p.quasi min} and Lemma~\ref{l.vol constr}, we know that the $E_m$ are uniform $\Lambda$-minimizers. Hence, by the regularity in Proposition~\ref{p.improved} the sets $E_m$ can be parametrized on a optimal isoperimetric ball $B_{E_m}^{opt}$ by a $C^{3,\alpha}$ regular function. Hence, we can derive for $E_m$ the improved perimeter estimate in Proposition~\ref{p.deficit} and use the optimal isoperimetric inequality to conclude \eqref{e.rate-0}.

For what concerns (ii), let $q\in\cH$ be a generic harmonic center and let $p_m^{opt}$ the center of the optimal ball for $E_m$, namely $B_{E_m}^{opt}=B_{r_m}(p_m^{opt})$. Comparing the energy of $E_m$ with that of $B_{r_m}(q)$ and using that, as shown in the proof of Theorem \ref{t.periodic}, it holds $|{E_m}\sdif B_{E_m}^{opt}|\lesssim\gamma\,r_m^{2n+2}$,
% \begin{equation*}
% |{E_m}\sdif B_{E_m}^{opt}|\lesssim\gamma\,r_m^{2n+2},
% \end{equation*}
we get:
\begin{align}\label{e.ball vs E-2}
\gamma\,r_m^{2n}\,g_{r_m}(p_m^{opt})-\gamma\,r_m^{2n} g_{r_m}(q)&=F_{\gamma,m}(B_{E_m}^{opt})-F_{\gamma,m}(B_{r_m}(q))
%\notag\\&
\leq F_{\gamma,m}(B_{E_m}^{opt})-F_{\gamma,m}(E_m)\notag\\
&\leq
% \Per(B_m)-\Per(E_m)+
\gamma\,NL(B_{E_m}^{opt})-\gamma\,NL(E_m)
%\notag\\&
\lesssim
\gamma\,\frac{|{E_m}\sdif B_{E_m}^{opt}|^2}{ r_m^{n-2}\,}+\gamma\,r_m^{n+1}\,|{E_m}\sdif B_{E_m}^{opt}|
\notag\\
&
\lesssim \gamma^2\,r_m^{3n+3}=\gamma\,\delta_0\, r_m^{3n}.
\end{align}
This implies by \eqref{e.g0} in Lemma \ref{l.balls} that 
\[
h(p_m^{opt})-h(q)=g_{r_m}(p_m^{opt})-g_{r_m}(q)+C\,r_m^2
% =\fint_{B_{r_m}}\fint_{B_{r_m}} \left(R(x+p_m,y+p_m)-R(x+q,y+q)\right)\,dx\,dy
\lesssim \delta_0\, r_m^{n}+r_m^2\lesssim r_m^2.
\]
Since the minimum points of $h$ are isolated, from this estimate it follows that $p_m^{opt}$ belongs to some neighborhood of the harmonic centers.

Finally, prove (iii) follows readly as in Theorem~\ref{t.periodic} comparing with the energy of $B_{r_m}(p_m^{opt})$. \qed

%%%%%%%%%%%%%%%%%%%%% Stability %%%%%%%%%%%%%%%%%%%%%%%%%%%%%%%%%%%%%
%%%%%%%%%%%%%%%%%%%%%%%%%%%%%%%%%%%%%%%%%%%%%%%%%%%%%%%%%%%%%%%%%%%%%
%%%%%%%%%%%%%%%%%%%%%%%%%%%%%%%%%%%%%%%%%%%%%%%%%%%%%%%%%%%%%%%%%%%%%
%%%%%%%%%%%%%%%%%%%%%%%%%%%%%%%%%%%%%%%%%%%%%%%%%%%%%%%%%%%%%%%%%%%%%
%%%%%%%%%%%%%%%%%%%%%%%%%%%%%%%%%%%%%%%%%%%%%%%%%%%%%%%%%%%%%%%%%%%%%
\section{Stability and exact solutions}\label{s.stable}
In this section we address the problem of the formation of exact spherical droplets, proving assertion (iv) in Theorem~\ref{t.main-0}.

\subsection{Non spherical domains: non existence of critical spherical droplets}\label{ss.general-dom}
In this section we show that if $\Omega\neq B_R$ is not itself a ball, the critical points of $F_{\gamma,m}$ cannot be exactly spherical.

\begin{proposition}\label{p.no ball}
Let $\Omega\subset\R^n$ be a $C^2$ bounded open set and assume that it is not a ball. Then, $\chi_{B_{r_m}(p)}$ with $B_{r_m}(p)\subset\Omega$ is not a critical point of $F_{\gamma,m}$.
\end{proposition}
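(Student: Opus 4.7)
The plan is to extract from the Euler--Lagrange equation \eqref{e.first var} a global formula for the potential $v$ throughout $\Omega$, and then impose the Neumann condition on $\de\Omega$ together with an integral identity to force $\de\Omega$ to be a sphere concentric with $B_{r_m}(p)$, contradicting the hypothesis that $\Omega$ is not a ball.

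\emph{Step 1 (reduction to a boundary condition).} Since $H_{\de B_{r_m}(p)} = (n-1)/r_m$ is constant on $\de B_{r_m}(p)$, \eqref{e.first var} forces $v$ to be constant there. Using $\int_\Omega G(x,y)\,dy = 0$ one writes $v(x) = \int_{B_{r_m}(p)} G(x,y)\,dy$, and splitting $G(x,y) = R(x,y) - \Gamma(|x-y|)$ yields $v = w - \Phi$, where $w(x) := \int_{B_{r_m}(p)} R(x,y)\,dy$ and $\Phi(x) := (\Gamma\ast\chi_{B_{r_m}(p)})(x)$. Since $\Phi$ is radial around $p$, it is automatically constant on $\de B_{r_m}(p)$, so the same must hold for $w$.

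\emph{Step 2 (rigidity of $w$).} Differentiating under the integral and using $\Delta_x R(x,y) = 1/|\Omega|$ (which follows from \eqref{e.robin} together with $R(x,y)=R(y,x)$) gives $\Delta w \equiv m$ in $\Omega$. Consequently
\[
u(x) := w(x) - \frac{m\,|x-p|^2}{2n}
\]
is harmonic in $\Omega$ and constant on $\de B_{r_m}(p)$. The maximum principle on $B_{r_m}(p)$ forces $u \equiv \beta$ on the closed ball, and analyticity of harmonic functions on the connected domain $\Omega$ propagates the identity globally, so $u \equiv \beta$ throughout $\Omega$. Hence, on all of $\Omega$,
\[
v(x) = \beta + \frac{m\,|x-p|^2}{2n} - \Phi(x).
\]

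\emph{Step 3 (Neumann condition forces $\Omega$ to be a ball).} For $x \in \de\Omega$ (which lies outside $\overline{B_{r_m}(p)}$), the explicit formulas \eqref{e.radial 2}--\eqref{e.radial n} give $\nabla\Phi(x) = r_m^n(x-p)/(n|x-p|^n)$, whence
\[
\nabla v(x) = \frac{x-p}{n}\left(m - \frac{r_m^n}{|x-p|^n}\right).
\]
The Neumann condition $\nabla v\cdot\nu = 0$ on $\de\Omega$ thus reads
\[
[(x-p)\cdot\nu(x)]\left(m - \frac{r_m^n}{|x-p|^n}\right) = 0 \quad\text{on } \de\Omega,
\]
so at every $x \in \de\Omega$ either $|x-p| = \tilde r := (|\Omega|/\omega_n)^{1/n}$ or $(x-p)\cdot\nu(x)=0$. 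Setting $\rho(x) := |x-p|^2/2$ and applying the divergence theorem,
\[
n|\Omega| = \int_\Omega \Delta\rho\,dx = \int_{\de\Omega}(x-p)\cdot\nu\,dS = \int_{\de\Omega \cap S^*}(x-p)\cdot\nu\,dS,
\]
where $S^* := \{|x-p|=\tilde r\}$. Since $(x-p)\cdot\nu \leq \tilde r$ on $\de\Omega\cap S^*$,
\[
n|\Omega| \leq \tilde r\,\mathcal{H}^{n-1}(\de\Omega\cap S^*) \leq \tilde r\cdot n\omega_n\tilde r^{n-1} = n|\Omega|,
\]
so equality holds throughout. This forces $S^* \subset \de\Omega$ (up to a negligible set), with $\nu = (x-p)/\tilde r$ along $S^*$. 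Any further component $C$ of $\de\Omega$ would satisfy $(x-p)\cdot\nu \equiv 0$, i.e.\ the radial field $X=x-p$ would be tangent to the closed bounded hypersurface $C$; but the flow $\phi_t(x) = p + e^t(x-p)$ of $X$ strictly increases $|x-p|$, contradicting invariance of $C$ under this flow. Hence $\de\Omega = S^*$ and $\Omega = B_{\tilde r}(p)$, contradicting the assumption that $\Omega$ is not a ball.

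\textbf{Main obstacle.} The decisive point is Step 2, turning the local condition ``$v \equiv$ const on $\de B_{r_m}(p)$'' into the global identity for $v$ on $\Omega$. The key structural feature exploited is that the regular part $R$ has a \emph{constant} Laplacian in each variable, so subtracting an explicit quadratic in $|x-p|$ produces a harmonic auxiliary function $u$ which is then pinned down everywhere in $\Omega$ by its constancy on a single interior sphere, via the combination of the maximum principle and analytic continuation. Once this global formula is in hand, Step 3 becomes a fairly mechanical exploitation of the Neumann condition and the divergence theorem.
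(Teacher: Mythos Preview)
Your proof is correct and follows the same three-step strategy as the paper: (1) deduce from the Euler--Lagrange equation that $v$ is constant on $\de B_{r_m}(p)$; (2) use a unique-continuation/rigidity argument to show $v$ is radially symmetric throughout $\Omega$; (3) conclude from the Neumann condition on $\de\Omega$ that $\Omega$ must be a ball centered at $p$.

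The implementations of Step 2 differ in packaging. The paper works directly with $v_m$: inside $B_{r_m}$ it uses uniqueness for the Dirichlet problem to get an explicit radial formula, then reads off the Neumann data on $\de B_{r_m}$ and shows that the resulting Cauchy problem in $\Omega\setminus B_{r_m}$ has a unique (radial) solution by extending the difference of two solutions to a harmonic function in all of $\Omega$. You instead decompose $v=w-\Phi$, observe that $\Phi$ is already radial so $w$ is constant on the sphere, and then pass to the harmonic auxiliary $u=w-\tfrac{m}{2n}|x-p|^2$, which is pinned down first on the ball (maximum principle) and then on all of $\Omega$ (analytic continuation). Both are unique-continuation arguments; yours avoids the explicit Cauchy data computation and is a touch slicker. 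Your Step 3 is also more detailed than the paper's, which simply asserts that radial symmetry of $v_m$ together with $\nabla v_m\cdot\nu=0$ on $\de\Omega$ forces $\Omega$ to be a ball; your divergence-theorem argument and the tangency/flow obstruction spell this out carefully.
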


\begin{proof}
The proof is a simple consequence of a unique continuation argument. Indeed, we show that if $\chi_{B_{r_m}(p)}$ satisfies the Euler--Lagrange equation \eqref{e.first var}, \eqref{e.potential}, namely
\begin{equation}\label{e.first var m}
\begin{cases}
H_{\de B_{r_m}(p)}+4\gamma\, v_m=\lambda_m,&\\
-\Delta v_m=\chi_{B_{r_m}(p)}-m & \text{in } \ \Omega,\\
\nabla v_m\cdot \nu =0 & \text{on }\de\Omega,\\
\int_\Omega v_m=0,&
\end{cases}
\end{equation}
then $v_m$ is a radially symmetric function with respect to $p$, and hence $\Omega$ must be a ball.
Assume without loss of generality that $p=0$ and \eqref{e.first var m} holds, and consider the case $n\geq 3$ (the two dimensional case is analogous). Since $H_{\de B_{r_m}}\equiv(n-1)/r_m$, it follows from the first equation in \eqref{e.first var m} that $v_m\vert_{\de B_{r_m}}\equiv c_m\in\R$. Thus, from the uniqueness for the Dirichlet problem for the Laplacian, we infer that $v_m\vert_{B_{r_m}}$ is radially symmetric and:
\begin{equation}\label{e.vball}
v_m(x)=\frac{(1-m)\,(|x|^2-r_m^2)}{2n}+c_m,\quad \text{for } |x|\leq r_m.
\end{equation}
Moreover, in $\Omega\setminus B_{r_m}$, $v_m$ solves the boundary value problem:
\begin{equation}\label{e.bd pb}
\begin{cases}
\Delta v_m=-m &\text{in } \Omega\setminus B_{r_m},\\
v_m=c_m,\quad \nabla v_m\cdot \nu_{\de B_{r_m}}=\frac{(1-m)\,r_m}{n} &\text{on } \de  B_{r_m}.
% \\
% %\nabla v\cdot \eta=c_1 &\text{on } \de  B_{r_m},\\
% \nabla v\cdot \eta=0 &\text{on }  \de\Omega.
\end{cases}
\end{equation}
Note that also \eqref{e.bd pb} has a unique solution. Indeed, given $v_1, v_2$ solving \eqref{e.bd pb}, $w=v_1-v_2$ solves
\begin{equation}\label{e.bd pb2}
\begin{cases}
\Delta w=0 &\text{in } \Omega\setminus B_{r_m},\\
w=\nabla w\cdot \eta=0 &\text{on } \de  B_{r_m},
% \\
% %\nabla v\cdot \eta=c_1 &\text{on } \de  B_{r_m},\\
% \nabla v\cdot \eta=0 &\text{on }  \de\Omega.
\end{cases}
\end{equation}
which is extended to a harmonic function in $\Omega$ setting $w\equiv0$ in $B_{r_m}$, thus implying $w\equiv0$ in $\Omega\setminus B_{r_m}$.
By a direct computation, the solution of \eqref{e.bd pb} is given by
\[
v_m(x):=-\frac{m\,(|x|^2-r_m^2)}{2n}+c_m
+\frac{r_m^2}{n\,(n-2)}-\frac{r_m^n}{n\,(n-2)\,|x|^{n-2}}.
\]
Therefore, since $\nabla v_m\cdot\nu\equiv 0$ on $\de \Omega$, it follows by the radial symmetry of $v_m$ that $\Omega$ is a ball, which contradicts the hypothesis. This gives the desired contradiction and concludes the proof.
\end{proof}

\begin{remark}
 In particular, in the case of periodic boundary conditions the exact sphere is never an equilibrium configuration.
\end{remark}

\subsection{Ball domains: uniqueness of a spherical droplet minimizer}\label{ss.ball-drop}
In this section we take $\Omega=B_R$. In this case we show that the ball $B_{r_m}$ is the only minimizer of $F_{\gamma,m}$ in the regime of small mass, thus completing the proof of Theorem~\ref{t.main-0}. In order to address this problem, here we need to introduce a new ingredient: the stability analysis of the droplet configuration. In particular, we will show that the spherical droplet $B_{r_m}$ is strictly stable which will turn to imply that it is the unique minimizer of $F_{\gamma,m}$.

\begin{proposition}\label{e.stable}
Assume $\Omega=B_R\subset\R^n$, for some $R>0$. There exists $\delta_0>0$ such that, if $\gamma\,r_m^3\leq \delta_0$ in the case $n\geq 3$ and if $\gamma\,r_m^3|\log r_m|\leq \delta_0$ in the case $n=2$, then $B_{r_m}$ is the unique minimizer of $F_{\gamma,m}$.
\end{proposition}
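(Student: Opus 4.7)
The plan is to combine the quantitative convergence results from Theorem~\ref{t.main-0} with a strict stability analysis of the ball $B_{r_m}$ centered at the origin. Since $\Omega=B_R$ is rotationally symmetric, the Robin function $h$ is radially increasing and attains its unique, non-degenerate minimum at $0$, so $\cH=\{0\}$. Moreover, by radial symmetry the potential $v_m$ solving \eqref{e.potential} with $E=B_{r_m}$ is radial and therefore constant on $\de B_{r_m}$, so $B_{r_m}$ automatically satisfies the Euler--Lagrange equation \eqref{e.first var} and is a critical point of $F_{\gamma,m}$.

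\textbf{Strict stability.} I would compute the second variation of $F_{\gamma,m}$ at $B_{r_m}$ on volume-preserving normal perturbations $\psi$ of $\de B_{r_m}$ and decompose $\psi=\sum_k a_k Y_k$ into spherical harmonics (with Laplace--Beltrami eigenvalues $\lambda_k=k(k+n-2)$). The perimeter second variation is diagonal, with $k$-th coefficient $(k-1)(k+n-1)/r_m^2$: it vanishes on translations ($k=1$) and is strictly positive of order $r_m^{-2}$ for $k\geq 2$. By rotational symmetry of the Green function on $B_R$, the nonlocal second variation is also diagonal in spherical harmonics. On the translation sector $k=1$ it coincides with the Hessian at $0$ of the map $p\mapsto F_{\gamma,m}(B_{r_m}(p))$, which, refining the expansion \eqref{e.exp ball} and using the non-degenerate minimum of $h$ at the harmonic center, is of order $\gamma\,r_m^{2n}\,|D^2h(0)|$ and strictly positive. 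For $k\geq 2$, the nonlocal contribution is at most $O(\gamma\,r_m^{2-n})$ (or $O(\gamma\,r_m^2|\log r_m|)$ in dimension $n=2$), and is dominated by the $r_m^{-2}$ perimeter gap under the assumption $\gamma\,r_m^3\,|\log r_m|\leq\delta_0$. Altogether, one obtains
\[
d^2 F_{\gamma,m}(B_{r_m})[\psi,\psi]\;\gtrsim\;\gamma\,r_m^{2n}\,\|P_1\psi\|_{L^2}^2+r_m^{-2}\,\|\psi-P_1\psi\|_{H^1}^2,
\]
where $P_1$ denotes the projection onto the $k=1$ sector.

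\textbf{Uniqueness.} Let $E_m$ be any minimizer. By Theorem~\ref{t.main-0}(i)--(ii) and Proposition~\ref{p.improved}, $\de E_m=\{p_m+(r_m+\ph_m(x))\,x\}$ with $p_m\to 0$ and $\|\ph_m\|_{C^{3,\alpha}}\to 0$; reparametrizing $E_m$ as a normal graph $\psi_m$ over $\de B_{r_m}$ centered at the origin (with a $k=1$ component reflecting the shift $p_m$) yields a perturbation with $\|\psi_m\|_{C^{3,\alpha}}$ arbitrarily small. A Taylor expansion of $F_{\gamma,m}$ around the critical point $B_{r_m}$ gives
\[
F_{\gamma,m}(E_m)-F_{\gamma,m}(B_{r_m})=\tfrac12\,d^2 F_{\gamma,m}(B_{r_m})[\psi_m,\psi_m]+\mathcal{R}_m,
\]
with cubic-type remainder satisfying $|\mathcal{R}_m|\lesssim \|\psi_m\|_{C^{3,\alpha}}\,\|\psi_m\|_{H^1}^2$. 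The strict stability estimate dominates $\mathcal{R}_m$ once $\|\psi_m\|_{C^{3,\alpha}}$ is small enough, so the minimizing property forces $\psi_m\equiv 0$, i.e.~$E_m=B_{r_m}$.

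\textbf{Main obstacle.} The delicate point is the analysis of the translation modes $k=1$: the perimeter Jacobi form is neutral there, so strict positivity must come entirely from the nonlocal term. This requires extracting the non-degenerate convexity of the Robin function at the harmonic center with the correct $r_m$-scaling in the second-variation expansion, and then ensuring that this (relatively weak) translation stiffness is still strong enough to absorb the higher-order remainder coming from the $C^{3,\alpha}$ parametrization. The higher modes $k\geq 2$ are routine since the rescaled perimeter Jacobi operator has a uniform spectral gap on $\s^{n-1}$.
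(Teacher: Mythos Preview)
Your proposal is correct and follows essentially the same three-step architecture as the paper: (a) $B_{r_m}$ is a critical point by radial symmetry; (b) strict stability is obtained by splitting the second variation into translation modes ($k=1$), controlled by the non-degeneracy $D^2h(0)\gtrsim\Id$ of the Robin function at the unique harmonic center, and higher modes ($k\geq 2$), controlled by the spectral gap of the Laplace--Beltrami operator on $\s^{n-1}$ together with $\gamma r_m^3\ll 1$; (c) the $C^{3,\alpha}$-closeness of any minimizer $E_m$ to $B_{r_m}$ (coming from Proposition~\ref{p.improved} and the estimate $|p_m^{opt}|\lesssim\delta_0^{1/2}r_m$) forces $E_m=B_{r_m}$.

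The only organizational difference is in step (c): the paper does not carry out the Taylor expansion directly but instead invokes \cite[Theorem~3.9]{AcFuMo}, which packages the passage from strict positivity of the second variation to strict local minimality in a $W^{2,p}$ neighborhood. Your direct Taylor argument with a cubic remainder $\mathcal{R}_m=O(\|\psi_m\|_{C^{3,\alpha}}\|\psi_m\|_{H^1}^2)$ is the same mechanism unpacked. You also correctly flag the genuine delicate point---namely that the translation stiffness is only of order $\gamma r_m^{2n}$ and must still absorb its share of the remainder---which the paper's citation of \cite{AcFuMo} leaves implicit.
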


\begin{proof}
The proof of the proposition is made in three steps.

\medskip

\textsc{Step 1.} \textit{The minimizers $E_m$ can be parametrized on $B_{r_m}$ for $\delta_0$ small enough}.

To see this, we recall that in the case $\Omega=B_R$, due to the spherical symmetry, the the origin is the only minimum point of the Robin function. Moreover, $ D^2h (0)\gtrsim \Id$.
To check this, one can either use thethe explict formula for $h$ (see, e.g., \cite[Chapter IV 5]{Leis} in the case $n=3$, similar formulas hold in every dimension):
\begin{equation*}%\label{e.h B}
 h(x) = 
% \begin{cases}
%  \frac{1}{2\,\pi}\,\log\left(\frac{R^2}{R^2-|x|^2}\right) + \frac{|x|^2}{2\,\pi\,R^2}+h(0), & \text{if } \ n=2,\\
 \frac{R\,|x|^{n-3}}{(R^2-|x|^2)^{n-2}}+\frac{1}{R^{n-2}}\log\left(\frac{R^2}{R^2-|x|^2}\right) + \frac{|x|^2}{2\,n\,\omega_n\,R^n} +h(0), \quad \text{if } \ n\geq 3;
% \end{cases}
\end{equation*}
or, one can simply notice that $R(x,0)=\frac{|x|^2}{2\,n\,\omega_n R^n}$, so that $D^2h(0)=D_x^2R(0,0)=\frac{\Id}{n\omega_nR^n}$. From the definition of $g_r$ in \eqref{e.gr} and the radial symmetry of $h$, it is readly verified that $g_r$ also has minimum in the origin and this minimum is not degenerate as well. From \eqref{e.ball vs E-2}, we can hence conclude that $|p_m^{opt}|^2\lesssim \delta_0\,r_m^n$. Note that, in any dimension $n$, this implies that
\begin{equation}\label{e.p opt}
|p_m^{opt}|\lesssim \delta_0^{1/2}\, r_m. 
\end{equation}

This actually leads straightforwardly to the claim. Indeed, for $\delta_0$ small enough, there exists $s<1$ such that, for every point $x\in \de B_{r_m}$, $B_{sr_m}(x)\cap B_{r_m}^{opt}$ is a graph over $\de B_{r_m}$ with small Lipschitz constant. Since by (i) of Theorem~\ref{t.main-0} the sets $E_m$ are parametrized on $\de B_{r_m}^{opt}$ with a graph of small $C^1$-norm, this implies that in turns also $\de E_m$ is a graph on $\de B_{r_m}$. Moreover, the $C^{3,\alpha}$ regularily is clearly preserved for this new parametrization.

\medskip

\textsc{Step 2.} \textit{We show now that for $\delta_0$ small enough, the ball $B_{r_m}$ is strictly stable.}

Let us recall the second variation for $F_{\gamma,m}$. Let $E$ be a stationary point and consider vector fields $X\in C^1_c(\Omega,\R^n)$ such that
\begin{equation}\label{e.admissible}
\int_{\de E} X\cdot\nu_E\,d\cH^{n-1}=0.
\end{equation}
Following \cite[Lemma 2.4]{BdC}, for every such field, there exists $F:\Omega\times(-\eps,\eps)\to\Omega$ such that:
\begin{itemize}
\item[(a)] $F(x,0)=x$ for all $x\in\Omega$, $F(x,t)=x$ for all $x\in\de \Omega$ and $t\in(-\eps,\eps)$;
\item[(b)] $E_t:=F(E,t)$ satisfies $|E_t|=|E|$ for every $t\in(-\eps,\eps)$;
\item[(c)] $\frac{\de F(x,t)}{\de t}\vert_{t=0}=X(x)$ for every $x\in \de E$.
\end{itemize}
The stability operator for deformations as above is given by (see \cite{Mu} in the case $n=2,3$ and \cite{AcFuMo, ChSt} in the general case)
\begin{align}\label{e.stability op}
F_{\gamma,m}''(E)[X]:=&{}\Per''(E)[X]+\gamma\,\NL''(E)[X]\notag\\
=&{}\int_{\de E}\left(|\nabla_{\de E}(X\cdot\nu_E)|^2-|A|^2\,(X\cdot\nu_E)^2\right)\,d\cH^{n-1}+\notag\\
&+8\,\gamma\,\int_{\de E}\int_{\de E}G(x,y)\,(X(x)\cdot\nu_E)\,(X(y)\cdot\nu_E)\,d\cH^{n-1}(x)\,d\cH^{n-1}(y)\,+\notag\\
&+4\,\gamma\,\int_{\de E}\nabla v\cdot\nu_E\,(X\cdot\nu_E)^2\,d\cH^{n-1},
\end{align}
where $|A|$ is the lenght of the second fundamental form of $\de E$ and $v$ solves \eqref{e.potential}.

In order to prove the strict stability of $B_{r_m}$ we need to compute \eqref{e.stability op} on $E=B_{r_m}$ and show the existence of a constant $c_0(n,m,\gamma)> 0$ such that
\begin{equation}\label{e.stability}
F_{\gamma,m}''(B_{r_m})[X]\geq c_0\,\|X\cdot\nu_{B_{r_m}}\|_{L^2(\de B_{r_m})}^2,
\end{equation}
for every $X$ as in \eqref{e.admissible}. We start by noticing that the following inequality holds true: there exists a constant $c_1(n)>0$ such that
\begin{equation}\label{e.P''}
\Per''(B_{r_m})[X]\geq \frac{c_1(n)}{r_m^2}\,\,\|X\cdot\nu_{B_{r_m}}\|_{L^2(\de B_{r_m})}^2,\quad\forall \ X\vert_{\de B_{r_m}}\neq \textup{const}.
\end{equation}
%for all nonconstant $X$ in \eqref{e.admissible}.
This is a simple consequence of the properties of the spectrum of the Laplace--Beltrami operator on the sphere $-\Delta_{\de B_{r_m}}$. Indeed, by the decomposition in spherical harmonics  (see, e.g., \cite[Chapter 5]{HFT}), let write $X\cdot\nu_{B_{r_m}}=\sum_{i\in\N^+} f_i$, with $f_i$ spherical harmonic in the $i^{\textup{th}}$ eigenspace of the Laplace--Beltrami operator, i.e. $-\Delta_{\de B_{r_m}} f_i = \lambda_i\,f_i$ and $0<\lambda_1<\lambda_2<\ldots$ is the discrete spectrum of the operator. Note that $i$ ranges over $\N^+$ because the harmonic functions on the sphere (i.e., the constant functions) are ruled out by the average condition \eqref{e.admissible}. If $\Per''(B_{r_m})[X]=0$, i.e.
\[
\int_{\de B_{r_m}}\left(|\nabla_{\de B_{r_m}}(X\cdot\nu_{B_{r_m}})|^2-\frac{n-1}{r_m^2}\,(X\cdot\nu_{B_{r_m}})^2\right)\,d\cH^{n-1}=0,
\]
then, by a simple integration by parts and using the orthogonality of the eignespaces, it follows that
\[
 \sum_{i\in\N^+} \left(\lambda_i-\frac{n-1}{r_m^2}\right)\,\|f_i\|_{L^2(\de B_{r_m})}=0
\]
As it is well-know, $\lambda_1=\frac{n-1}{r_m^2}$. Hence, $\Per''(B_{r_m})[X]=0$ if and only if $X\cdot \nu_{B_{r_m}}=f_1$ is in the first eigenspace, that is $X\vert_{\de B_{r_m}}$ is constant. Then, the existence of a constant $c_1$ fulfilling \eqref{e.P''} follows by the discreteness of the spectrum $-\Delta_{B_{r_m}}$ and the obvious scaling property in $r_m$.

We now recall that the second term in $F''_{\gamma,m}$ is positive (see \cite{ChSt}); while the last is equal to (for $E=B_{r_m}$ here $v=v_m$ is given in \eqref{e.vball})
\[
4\,\gamma\,\int_{\de B_{r_m}}\nabla v_m\cdot\nu_{B_{r_m}}\,(X\cdot\nu_{B_{r_m}})^2\,d\cH^{n-1}=-\frac{4\,\gamma\,(1-m)\,r_m}{n}\,\|X\cdot\nu_{B_{r_m}}\|_{L^2(\de B_{r_m})}^2.
\]
Moreover, for a constant vector field $X=\zeta\in\R^n$, we can compute explicitely $\NL''(B_{r_m})[\zeta]$ in the following way:
\begin{align}\label{e.NL'' v}
\NL''(B_{r_m})[\zeta]=\frac{d^2 NL(B_{r_m}(t\zeta))}{d\,t^2}\Big\vert_{t=0}&=\frac{d^2}{d\,t^2}\int_{B_{r_m}}\int_{B_{r_m}} \left(\Gamma(|x-y|)+R(x+t\zeta,y+t\zeta)\right)\,dx\,dy\Big\vert_{t=0}\notag\\
&=\int_{B_{r_m}}\int_{B_{r_m}}(D^2R(x,y)(\zeta,\zeta),(\zeta,\zeta))\,dx\,dy\gtrsim |\zeta|^2\notag\\
&\gtrsim \|\zeta\cdot\nu\|_{L^2(\de B_{r_m})}^2,
\end{align}
because we have already noticed that for $\Omega=B_R$ the regular part of the Green function has in the origin the unique nondegenerate minimum, i.e.~$D^2R(0,0)=\mu \,\Id$ for some $\mu>0$.

The proof of \eqref{e.stability} is now clear. For a nonconstant $X$, \eqref{e.stability} follows if $\gamma\,r_m^3$ is small enough with respect to $c_1(n)$. For a constant vector field $X$ \eqref{e.stability} is inferred from \eqref{e.NL'' v}.

\medskip

\textsc{Step 3.} \textit{$B_{r_m}$ is the unique minimizer}.
The conclusion follows from the fact that the minimizers $E_m$ are $C^2$ close to a strictly stable configuration, namely $B_{r_m}$, thus implying that actually $E_m$ coincide with $B_{r_m}$. The proof of this fact, well-known for the area functional, can be achieved by a carefull construction of a flow interpolating $\de E_m$ and $\de B_{r_m}$. Such computations appeared in \cite[Theorem~3.9]{AcFuMo}. In particular, to reduce to this case, let $\psi_m$ be the parametrization of $\de E_m/r_m$ on $\de B_{1}$, i.e.
\[
 E_m = \big\{r_m\,x(1+\psi_m(x)) \, : \, x\in \de B_1\big\}.
\]
By Proposition~\ref{p.improved} and \eqref{e.p opt}, for every $\eta>0$ we can choose $\delta_0$ small enough to have $\|\psi_m\|_{C^{3,\alpha}}\leq \eta$. We are, hence, a small perturbation of the fixed stable configuration $B_1$ and \cite[Theorem~3.9]{AcFuMo} applies.
\end{proof}
\begin{remark}
The proof of the previous result becomes trivial if the minimizer $E_m$ is such that $B_{E_m}^{opt}$ is centered at the origin,that is $B_{E_m}^{opt}=B_{r_m}$. In this case one can indeed drop the quadratic term in \eqref{e.deficit periodic} and, by the quantitative isoperimetric inequality, get
\begin{equation*}
|B_{r_m}\sdif E_m|^2\lesssim\gamma\, r_m^3 |B_{r_m}\sdif E_m|^2,
\end{equation*}
which turns to imply $E_m=B_{r_m}$.
\end{remark}

%%%%%%%%%%%%%%%%%%%%% Appendix %%%%%%%%%%%%%%%%%%%%%%%%%%%%%%%%%%%%%%
%%%%%%%%%%%%%%%%%%%%%%%%%%%%%%%%%%%%%%%%%%%%%%%%%%%%%%%%%%%%%%%%%%%%%
%%%%%%%%%%%%%%%%%%%%%%%%%%%%%%%%%%%%%%%%%%%%%%%%%%%%%%%%%%%%%%%%%%%%%
%%%%%%%%%%%%%%%%%%%%%%%%%%%%%%%%%%%%%%%%%%%%%%%%%%%%%%%%%%%%%%%%%%%%%
%%%%%%%%%%%%%%%%%%%%%%%%%%%%%%%%%%%%%%%%%%%%%%%%%%%%%%%%%%%%%%%%%%%%%

\appendix
\section{On the behavior of the function $R$ in a neighborhood of $\de\Omega$}\label{a.Robin}
In this section we prove the estimates \eqref{e.Robin} and \eqref{e.robin0} on the regular part $R$ of the Green function in \eqref{e.G}. The function $R$ solves
\[
 \begin{cases}
  \Delta R_x = \frac{1}{|\Omega|} & \text{in }\;\Omega,\\
  \nabla R_x \cdot \nu = \nabla \Gamma_x \cdot \nu & \text{on }\;\de \Omega,\\
  \int_\Omega R_x = \int_\Omega \Gamma_x. &
 \end{cases}
\]

We introduce the following notation. Since $\Omega$ is assumed to have $C^2$ regular boundary, for $x$ in a sufficiently small tubolar neighborhood of $\de\Omega$, there exists a unique point $x_0 \in \de \Omega$ such that $\dist(x,\de\Omega)=|x-x_0|$. Hence, can hence consider $x^*\in\R^n\setminus\Omega$ such that $x^*-x_0 = x_0-x$ and set $S_x := R_x + \Gamma_{x^*}$. $S_x$ is also characterized by the following boundary value problem:
\begin{equation}\label{e.Sx}
 \begin{cases}
  \Delta S_x = \frac{1}{|\Omega|} & \text{in }\;\Omega,\\
  \nabla S_x \cdot \nu = (\nabla \Gamma_x + \nabla \Gamma_{x^*}) \cdot \nu & \text{on }\;\de \Omega,\\
  \int_\Omega S_x = \int_\Omega (\Gamma_{x}+\Gamma_{x^*}). &
 \end{cases}
\end{equation}

The main idea being the estimates are illustrated in the following simple case. Assume that $0\in\de\Omega$ and $B_2\cap\Omega=\{x\in B_2 \,:\, x_n<0 \}$. Then, by an elementary computation, for every $x\in B_1$,
\[
 \begin{cases}
  (\nabla \Gamma_x + \nabla \Gamma_{x^*}) \cdot \nu = 0 & \text{on }\; \de\Omega \cap B_2,\\
  \left| (\nabla \Gamma_x + \nabla \Gamma_{x^*}) \cdot \nu \right| \lesssim 1 & \text{on }\;\de\Omega \setminus B_2.
 \end{cases}
\]
Therefore, it follows from \eqref{e.Sx} that $|S_x|\leq C$. This in turns implies \eqref{e.Robin}: namely, there exists $r_0>0$ such that, for $r\leq r_0$ and $x,y\in
\Omega \cap B_1$ with $r<-x_n<2\,r$ and $|x-y|\leq r$,
\[
 |R_x(y)|\simeq |\Gamma_{x^*}(y)|\simeq |\Gamma(r)|.
\]
Moreover, since for $|\Gamma_{x_*}|\lesssim |\Gamma_x|+1$ for every $x\in B_1$, \eqref{e.robin0} follows straightforwardly as well. 

The general case of a $C^2$ bounded domain $\Omega$ can be deduced by a perturbation of the argument above.
Let $r_0>0$ be such that, for every $x_0\in\de\Omega$, $B_{2\,r_0}(x_0)\cap \de\Omega$ can be written as the graph of a function: namely, up to an affine change of coordinates, we
may assume that $x_0=0$ and
\[
 B_{2\,r_0} \cap \Omega = \{(z',t) \;:\; t\leq u(z') \},
\]
for a given $u:B_{2\,r_0}^{n-1}\subset \R^{n-1}\to \R$ in $C^2 (B_{2\,r_0}^{n-1})$ with $u(0)=|\nabla u(0)|=0$. In particular, we have $x=(0,-d)$ with $d=\dist(x,\de\Omega)$. Set $D := B_{2\,r_0}^{n-1} \times [0,1]$ and consider the function
$g:D \to \R$ given by
\[
 g(z',t) := \Big(\nabla \Gamma_x \big( (z',t\,u(z')) \big) + \nabla \Gamma_{x^*} \big( (z',t\,u(z')) \big)\Big) \cdot \frac{(-t\,\nabla u(z'),1)}{\sqrt{1+t^2\,|\nabla u(z')|^2}}.
\]
By definition, $g(z',1)=\nabla S_x \cdot \nu\vert_{\de \Omega}$ and $g(z',0)=0$. Writing $z_t=(z',t\,u(z'))$, it holds
\begin{align*}
 \de_t \, g(z',t) = {} & u(z')\,\Big( \frac{\de}{\de x_n} \nabla \Gamma_x(z_t) + \frac{\de}{\de x_n} \nabla \Gamma_{x^*}(z_t) \Big) \cdot \nu\vert_{\de \Omega_t} +\\
& - \Big( \nabla
\Gamma_x (z_t)+ \nabla \Gamma_{x^*} (z_t) \Big) \cdot \frac{\nabla u (z')}{\sqrt{1+t^2\,|\nabla u(z')|}}+\\
& - \frac{t\,|\nabla u(z')|^2}{1+t^2|\nabla u(z')|^2}\, \Big( \nabla \Gamma_x+ \nabla \Gamma_{x^*} \Big) \cdot \nu\vert_{\Omega_t}(z_t).
\end{align*}
Since $|u(z')|\leq C\, |z'|^2$ and $|\nabla u(z')|\leq C\, |z'|$, where $C>0$ depends only on $\|u\|_{C^2}$, one infers that
\begin{align}\label{e.deriv}
 |\de_t \, g(z',t)| & \lesssim \Big( |D^2 \Gamma_x (z_t)|+|D^2 \Gamma_{x^*}(z_t)| \Big)\,|z'|^2 + \Big( |\nabla \Gamma_x|+|\nabla \Gamma_{x^*}| \Big)\,|z'| \notag\\
& \lesssim \frac{|z'|^2}{|x-z_t|^n} + \frac{|z'|}{|x-z_t|^{n-1}}.
\end{align}
Note that, for $|z'|\leq r_0$ small enough,
\begin{align*}
 |x-z_t|^2&=|d+tu(z')|^2+|z'|^2\geq \frac{d^2}{2}-|u(z')|^2+|z'|^2\geq \frac{d^2}{2}-C\,|z'|^4+|z'|^2\\
&\geq \frac{d^2}{2} + \frac{|z'|^2}{2},
\end{align*}
from which we infer
\begin{equation}\label{e.universal}
  |\de_t \, g(z',t)| \lesssim \frac{|z'|^2}{|x-z_t|^n} + \frac{|z'|}{|x-z_t|^{n-1}}\lesssim \frac{|z'|}{(d^2+|z'|^2)^{\frac{n-1}{2}}} =: f(z').
\end{equation}
It is simple to see that $f\in L^p(B_{2r_0}^{n-1})$ for every $p\in [1,\infty]$ and
\begin{align}\label{e.pnorm}
 \int_{B_{2r_0}^{n-1}}f(z')^p\,dz' & = \int_{0}^{2r_0} \frac{s^p}{(d^2+s^2)^{\frac{p(n-1)}{2}}}\,s^{n-2}\,ds\notag\\
&= d^{-p\,(n-2)+n-1}\int_{0}^{\frac{2r_0}{d}} \frac{t^{p+n-2}}{(1+t^2)^{\frac{p(n-1)}{2}}}\,dt\notag\\
& \lesssim
\begin{cases}
  2r_0 & \text{if }\; n=2,\\
 d^{-p\,(n-2)+n-1}\int_{0}^{\infty} \frac{t^{p+n-2}}{(1+t^2)^{\frac{p(n-1)}{2}}}\,dt \lesssim d^{-p\,(n-2)+n-1} & \text{if } \, n\geq3,
\end{cases}
\end{align}
Since, for $\dist(x,\de\Omega)\leq d_0$, and every $z\in \de\Omega\cap B_{2r_0}$,
\[
 |\nabla S_x (z) \cdot \nu\vert_{\de \Omega\cap B_{2r_0}}| = |g(z',1) - g(z',0)| \leq \int_0^1 |\de_t g(z',s)|\,ds,
\]
we deduce the following bound on the $L^p$ norm of $\nabla S_x \cdot \nu$:
\begin{align*}
\|\nabla S_x \cdot \nu\|^p_{L^p(\de\Omega\cap B_{2r_0})}\lesssim\, &\int_{\de\Omega\cap B_{2r_0}} \left(\int_0^1 |\de_t g(z',s)|\,ds\right)^p\,dz'+ \int_{\de\Omega\setminus B_{2r_0}}|\nabla S_x \cdot \nu|^p \,dz'\\
\lesssim&\int_{\de\Omega\cap B_{2r_0}} \int_0^1 |\de_t g(z',s)|^p\,ds\,dz'+ C\\
\stackrel{\eqref{e.pnorm}}{\lesssim}&
\begin{cases}
  2r_0 & \text{if }\; n=2,\\
 d^{-p\,(n-2)+n-1}\int_{0}^{\infty} \frac{t^{p+n-2}}{(1+t^2)^{\frac{p(n-1)}{2}}}\,dt \lesssim d^{-p\,(n-2)+n-1} & \text{if } \, n\geq3.
\end{cases}
\end{align*}
Setting $\beta=(n-1)/p>0$, as a consequence, by $L^p$-regularity theory for \eqref{e.Sx}, we get $\|S_x\|_{W^{1,p}}\lesssim d_0^\beta\dist(x,\de\Omega)^{2-n}$. 
By the arbitrariness of $p$ and thanks to Sobolev embedding we finally get that 
\begin{equation}\label{e.ebasta}
|S_x|\lesssim d_0^\beta\dist(x,\de\Omega)^{2-n}.
\end{equation}
The proofs of \eqref{e.robin0} and \eqref{e.robin} now follows straightforwardly. 

\subsection{Proof of \eqref{e.robin0}}
Fix $r_0\leq d_0$ as above. Then, if $x$ belongs to some $\Omega_{r_0}$, then
\begin{align*}
 |G(x,y)| &\leq |\Gamma(x,y)| + |R(x,y)| \\
 & \leq |\Gamma(x,y)| + |\Gamma(x^*,y)| + |S_x| \\
 & \lesssim |\Gamma(x,y)| + 1,
\end{align*}
where we used that $\dist(x,\de\Omega)^{2-n}\lesssim |\Gamma(x,y)|$ for every $y\in\Omega$ if $n\geq3$, and $\dist(x,\de\Omega)\lesssim 1$ in $n=2$.

\subsection{Proof of \eqref{e.robin}}
Note that, for $r_0$ small enough, whenever $r<r_0/2$, $r\leq \dist(x,\de\Omega)\leq 2\,r$ and $|y-x|\leq r$, then $|\Gamma_{x^*}(y)|\simeq |\Gamma_{x}(y)|$.
Then, by \eqref{e.ebasta} we may assume that $d_0$ is sufficiently small that, for $r_0\leq d_0$ and $x.y$ as above, it holds 
\[
 |R_x(y)|\geq |\Gamma_{x^*}(y)|-|S_x(y)|\gtrsim |\Gamma_{x}(y)| - d_0^\beta\,\dist(x,\de\Omega)^{2-n}\gtrsim |\Gamma_{x}(y)|,
\]
and
\[
 |R_x(y)|\leq |\Gamma_{x^*}(y)|+|S_x(y)|\lesssim |\Gamma_{x}(y)| + d_0^\beta\, \dist(x,\de\Omega)^{2-n}\lesssim |\Gamma_{x}(y)|.
\]

\subsection*{Acknowledgements}
The research of the first author was partially supported by the European Research Council under FP7,
Advanced Grant n. 226234 ``Analytic Techniques for Geometric and Functional Inequalities" and by
the Deutsche Forschungsgemeinschaft through the Sonderforschungsbereich 611 during his stay at the Institute for Applied Mathematics of the University of Bonn.

%%%%%%%%%%%%%%%%%%%%% Bibliography %%%%%%%%%%%%%%%%%%%%%%%%%%%%%%%%%%
%%%%%%%%%%%%%%%%%%%%%%%%%%%%%%%%%%%%%%%%%%%%%%%%%%%%%%%%%%%%%%%%%%%%%
%%%%%%%%%%%%%%%%%%%%%%%%%%%%%%%%%%%%%%%%%%%%%%%%%%%%%%%%%%%%%%%%%%%%%
%%%%%%%%%%%%%%%%%%%%%%%%%%%%%%%%%%%%%%%%%%%%%%%%%%%%%%%%%%%%%%%%%%%%%
%%%%%%%%%%%%%%%%%%%%%%%%%%%%%%%%%%%%%%%%%%%%%%%%%%%%%%%%%%%%%%%%%%%%%
\bibliographystyle{plain}
\bibliography{bib-CicSpa_2011}

\end{document}